\documentclass[a4paper,12pt]{amsart}
\baselineskip=12pt

\evensidemargin= 0 cm \oddsidemargin= 0 cm \topmargin -0.5cm
\textheight 23.5cm \textwidth 16.3cm

\usepackage{graphicx}
\usepackage{amsmath,amssymb,amsthm,amsfonts}
\usepackage{amssymb}
\usepackage[active]{srcltx}

\newtheorem{thm}{Theorem}[section]

\newtheorem{lem}[thm]{Lemma}
\newtheorem{prop}[thm]{Proposition}
\newtheorem{rem}[thm]{Remark}
\newtheorem{defn}[thm]{Definition}

\newcommand{\bremark}{\begin{rem} \textup}
\newcommand{\eremark}{\end{rem} }

\newcommand{\cuad}{{\sqcap\kern-.68em\sqcup}}

\newcommand{\R}{{\mathbb{R}}}
\newcommand{\N}{{\mathbb{N}}}

\renewcommand{\rho}{\varrho}
\renewcommand{\theta}{\vartheta}

\begin{document}

\parindent 0pc
\parskip 6pt
\overfullrule=0pt

\title[On the H\"{o}pf Boundary Lemma]{On the  H\"{o}pf Boundary Lemma for quasilinear problems involving singular nonlinearities and applications}

\author{Francesco Esposito* and Berardino Sciunzi*}

\date{\today}

\date{\today}

\address{* Dipartimento di Matematica e Informatica, UNICAL,
Ponte Pietro  Bucci 31B, 87036 Arcavacata di Rende, Cosenza, Italy.}

\email{esposito@mat.unical.it}

\email{sciunzi@mat.unical.it}

\maketitle

\date{\today}

\begin{abstract}
In this paper we consider positive solutions to quasilinear elliptic
problem with singular nonlinearities.  We provide a H\"{o}pf type
boundary lemma via a suitable scaling argument that allows to deal
with the lack of regularity of the solutions up to the boundary.
\end{abstract}

\section{Introduction}
We deal with positive weak solutions to the singular quasilinear
elliptic problem:
\begin{equation}\tag{$\mathcal P$}\label{problem1}
\begin{cases} \displaystyle
-\Delta_p u =\frac{1}{u^\gamma} + f(u) & \text{in}\,\,\Omega  \\
u> 0 &  \text{in}\,\,\Omega  \\
u=0 &  \text{on}\,\,\partial \Omega
\end{cases}
\end{equation}
where $p>1$, $\gamma > 1$, $\Omega$ is a $C^{2,\alpha}$ bounded
domain of $\R^N$ with $N \geq 1$ and $f:\Omega \rightarrow \R$
locally Lipschitz continuous.  A key point to have in mind in the study of semilinear or quasilinear problems involving singular nonlinearities is the fact that the source term loses regularity at zero, namely the problem is singular near the boundary. As a first consequence, solutions are not smooth up to the boundary (see \cite{lazer}) and the gradient generally blows up near the boundary in such a way that $u\notin W^{1,p}_0(\Omega)$. Therefore, here and in all the paper, we mean that $u\in C^{1,\alpha}(\Omega)$ is a solution to \eqref{problem1} in the weak distributional meaning according to Definition \ref{weakformbounded}. Existence and uniqueness results regarding problem \eqref{problem1} can be found e.g. in \cite{arcoya,boccardo2,boccardo,masha,montrucchio,nodt,giachetti,oliva1,oliva2}.
\\

\noindent In this setting we prove here a H\"{o}pf type boundary lemma regarding the sign of the derivatives of the solution near the boundary and in the interior of the domain. 
To state our result we need some notation thus we shall denote with
 $I_\delta(\partial \Omega)$  a neighborhood of the boundary with the \emph{unique nearest point property} (see e.g.
\cite{FOO}). For $x \in I_\delta(\partial \Omega)$ we denote by $\hat{x} \in
\partial \Omega$ the point such that $|x-\hat{x}| = \text{dist}(x,\partial
\Omega)$ and we set:
\begin{equation}\label{normin}
\eta(x):=\frac{x-\hat{x}}{|x-\hat{x}|}.
\end{equation}
With this notation we have the following:
\begin{thm} \label{Hopf}
Let $u \in C^{1,\alpha}(\Omega) \cap C(\overline{\Omega})$ be
a positive solution to \eqref{problem1}. Then, for any $\beta>0$,  there exists a
neighborhood $I_\delta(\partial \Omega)$ of $\partial
\Omega$, such that
\begin{equation}\label{derhopf}
\partial_{\nu(x)} u > 0 \qquad \quad \forall \, x \in I_\delta(\partial
\Omega)
\end{equation}
whenever  $\nu(x) \in \R^N$ with $\|\nu(x)\|=1$ and $( \nu(x) , \eta(x) ) \geq \beta$.
\end{thm}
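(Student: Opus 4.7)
The plan is to argue by contradiction via a blow--up at the natural singular scale. Suppose the statement fails: for some $\beta>0$ there exist $x_n\in\Omega$ with $d_n:=\operatorname{dist}(x_n,\partial\Omega)\to 0$ and unit vectors $\nu_n$ satisfying $(\nu_n,\eta(x_n))\ge\beta$ and $\partial_{\nu_n}u(x_n)\le 0$. Denoting by $\hat x_n\in\partial\Omega$ the nearest point, after a rigid motion and passing to a subsequence we may assume $\hat x_n\to 0$, the outer unit normal at $\hat x_n$ tends to $-e_N$, and $\nu_n\to\nu^*$ with $(\nu^*,e_N)\ge\beta$. A $C^{2,\alpha}$ change of variables $\Phi_n$ flattens $\partial\Omega$ in a fixed neighborhood of $0$, sending $x_n$ to $d_n e_N$ up to an $O(d_n^2)$ correction.

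Introduce the natural singular rescaling with exponent $a:=p/(p-1+\gamma)\in(0,1)$ by
\[
v_n(y):=d_n^{-a}\,u\bigl(\Phi_n^{-1}(d_n y)\bigr),
\]
so that $v_n(e_N)=d_n^{-a}u(x_n)$ remains bounded and bounded away from zero by the Lazer--McKenna type two--sided estimate $c\,d(x,\partial\Omega)^a\le u(x)\le C\,d(x,\partial\Omega)^a$ near $\partial\Omega$ (built with the power $d^a$ as sub/supersolution, and using $\gamma>1$). The identity $-a(p-1)+p-a\gamma=0$ defining $a$ ensures that $v_n$ solves a quasilinear equation of the form
\[
-\operatorname{div}\bigl(\mathcal A_n(y,\nabla v_n)\bigr)=v_n^{-\gamma}+d_n^{a\gamma}f(d_n^a v_n)\qquad\text{in }\Omega_n,
\]
where $\Omega_n$ invades $\R_+^N:=\{y_N>0\}$, the operator fields $\mathcal A_n(y,\xi)$ tend to $|\xi|^{p-2}\xi$ uniformly on compacts, and the perturbation $d_n^{a\gamma}f(d_n^a v_n)$ vanishes locally uniformly since $f$ is locally Lipschitz and $a\gamma>0$. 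The boundary estimate rescales to $c\,\operatorname{dist}(y,\partial\Omega_n)^a\le v_n(y)\le C\,\operatorname{dist}(y,\partial\Omega_n)^a$.

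On any compact $K\subset\R_+^N$ the $v_n$ are bounded above and bounded away from $0$, so the right--hand side of the rescaled equation is uniformly bounded on $K$ and the classical interior $C^{1,\alpha}$ regularity for the $p$--Laplacian yields uniform $C^{1,\alpha}(K)$ bounds. Up to a subsequence $v_n\to v$ in $C^1_{\mathrm{loc}}(\R_+^N)$, where $v$ is a positive solution of $-\Delta_p v=v^{-\gamma}$ in $\R_+^N$ with $v=0$ on $\{y_N=0\}$ and $c\,y_N^a\le v(y)\le C\,y_N^a$. The hypothesis $\partial_{\nu_n}u(x_n)\le 0$ becomes, after the chain rule, $(\nabla v_n(e_N),R_n\nu_n)\le 0$ up to corrections from $D\Phi_n$ that vanish in the limit; since $R_n\eta(x_n)=e_N$ in our frame, the vector $\tilde\nu:=\lim R_n\nu_n$ satisfies $(\tilde\nu,e_N)\ge\beta$, and passing to the limit yields $(\nabla v(e_N),\tilde\nu)\le 0$.

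The conclusion then follows once one knows that the limit $v$ is one--dimensional and strictly monotone in $y_N$: writing $v(y)=V(y_N)$ with $V'(1)>0$ gives $(\nabla v(e_N),\tilde\nu)=V'(1)(\tilde\nu,e_N)\ge V'(1)\beta>0$, contradicting the previous inequality. This rigidity of the limiting profile --- which one would establish through a moving--planes scheme in the half--space adapted to the singular right--hand side and exploiting the matched boundary behavior $v\sim y_N^a$ --- is the technical heart of the argument, since the limiting equation is itself singular and the classical linear Hopf lemma is not directly applicable.
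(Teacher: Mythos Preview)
Your proposal is correct and follows the same blow--up strategy as the paper: argue by contradiction, rescale at the singular scale with exponent $a=p/(p-1+\gamma)$, use the two--sided barrier $u\asymp d(\cdot,\partial\Omega)^a$ to get uniform $C^{1,\alpha}$ bounds, pass to a limiting solution of $-\Delta_p v=v^{-\gamma}$ in $\R^N_+$ satisfying $v\asymp y_N^a$, and contradict the rigidity of that limit.

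Two points where the paper differs in implementation are worth noting. First, rather than a $C^{2,\alpha}$ boundary--flattening map $\Phi_n$ (which turns the $p$--Laplacian into a variable--coefficient operator $\mathcal A_n$), the paper uses only $n$--dependent \emph{isometries} $T_n$ sending $\hat x_n$ to $0$ and $\eta(x_n)$ to $e_N$; since $-\Delta_p$ is invariant under rigid motions, the rescaled equation keeps its exact form and the domain $T_n(\Omega)/\delta_n$ still converges to $\R^N_+$ because dividing a $C^2$ domain by $\delta_n\to 0$ flattens it. This avoids the bookkeeping with $\mathcal A_n$ and the chain--rule corrections from $D\Phi_n$. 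Second, the classification of the half--space limit (your ``technical heart'') is not done by moving planes in the paper but by a \emph{sliding} argument in the horizontal directions: one shows $u\equiv u(\cdot-\tau e_j)$ for every $j<N$ via weighted energy estimates with cutoffs (splitting $1<p<2$ and $p\ge 2$), reducing to a one--dimensional ODE on $\R^+$ whose unique solution with the prescribed growth is the explicit power $M t^a$. Your outline is otherwise the paper's argument.
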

The technique of  E. H\"opf \cite{hopf} (see also \cite{GT}) has been already developed, and improved, in the quasilinear setting. We refer the readers to \cite{pucser} (see also \cite{V}). Nevertheless the proof of Theorem \ref{normin}, namely the proof of the lemma in the case when it appears the singular term $u^{-\gamma}$, cannot be carried out in the standard way mainly because the solutions are not of class $C^1$ up to the boundary. More precisely the proofs in \cite{hopf,GT,pucser,V} has the common feature of basing on the comparison of the solution  with subsolutions that have a known behaviour on the boundary.
This approach, with some difficulty to take into account, can be exploited also in the singular case since $u^{-\gamma}$ has the right monotonicity behaviour. This actually leads to  control  the behaviour of the solution near the boundary with a comparison based on the distance function. This is in fact also behind our Theorem \ref{estbelowabove}. Although some of the underlying ideas in our approach  have a common flavour with the ones exploited in \cite{CES}, the proofs that we exploit are new and adapted to the degenerate nonlinear nature of the $p$-laplacian.
\\

\noindent We are mainly concerned with the study of the sign of the derivatives near the boundary.
Such a control  is generally deduced a posteriori, by contradiction, assuming that the solution is $C^1$ up to the boundary. In our setting this is not a natural assumption and we develop a different technique that in any case exploits very basic arguments of common use. In fact we carry out a scaling argument near the boundary that leads to a limiting problem in the half space.
 \begin{equation}\tag{$\mathcal P_1$}\label{problem}
\begin{cases} \displaystyle
-\Delta_p u= \frac{1}{u^\gamma} & \text{in}\,\,\R^N_+  \\
u> 0 &  \text{in}\,\,\R^N_+  \\
u=0 &  \text{on}\,\,\partial \R^N_+
\end{cases}
\end{equation}
where $p>1$, $\gamma > 1$,  $N \geq 1$, $\R^N_+:=\{x = (x_1,...,x_N)
\in \R^N \ | \ x_N>0 \}$ and $u \in C^{1,\alpha}(\R^N_+) \cap
C(\overline{\R^N_+})$.

Our scaling argument leads in fact to the study of a limiting profile which
is a solution to \eqref{problem} and obeys to suitable decay
assumptions. It is therefore crucial for our technique, and may also
have an independent interest, the following classification result:

\begin{thm}\label{uniqueness}
Let $\gamma > 1$ and let $u \in C^{1,\alpha}(\R^N_+) \cap
C(\overline{\R^N_+})$ be a solution to problem \eqref{problem} such
that
\begin{equation}\label{grow223324}
\underline{c} x_N^\beta \leq u(x) \leq \overline{C} x_N^\beta  \qquad\text{with}\quad\beta:=\frac{p}{\gamma+p-1}
\end{equation}
and
$\underline{c}, \overline{C}\in\mathbb{R}^+$. Then
\begin{equation} \label{unboundedSOl}
u(x)=u(x_N)=M x_N^\beta \qquad\text{with}\quad M:=\left[
\frac{(\gamma+p-1)^p}{p^{p-1}(p-1)(\gamma-1)}\right]^{\frac{1}{\gamma+p-1}}\,.
\end{equation}
\end{thm}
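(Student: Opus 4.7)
The plan is to verify the explicit profile first and then to derive $u\equiv Mx_N^\beta$ by a comparison argument with scalar multiples of it. Two structural facts I use throughout are that \eqref{problem} is invariant under translations in the $x'=(x_1,\dots,x_{N-1})$ directions and under the scaling $u\mapsto u_\lambda(x):=\lambda^{-\beta}u(\lambda x)$ (this scaling preserves \eqref{problem} precisely because $\beta(p+\gamma-1)=p$), and that both invariances preserve the class \eqref{grow223324} with the \emph{same} constants $\underline{c},\overline{C}$. A direct computation gives $-\Delta_p(cx_N^\beta)=c^{p-1}(p-1)(1-\beta)\beta^{p-1}x_N^{-\beta\gamma}$; matching this with $(cx_N^\beta)^{-\gamma}$ singles out $c=M$, so $U(x):=Mx_N^\beta$ solves \eqref{problem}. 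The same calculation shows that for every $c>0$, $c\neq 1$,
\[
-\Delta_p(cU)-(cU)^{-\gamma}=(c^{p-1+\gamma}-1)\,U^{-\gamma},
\]
so $cU$ is a strict supersolution for $c>1$ and a strict subsolution for $c<1$.

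For uniqueness I focus on the upper estimate $u\leq U$; the lower estimate is symmetric. By \eqref{grow223324},
\[
c^{\ast}:=\inf\{c\geq 1\,:\,u\leq cU\text{ in }\R^N_+\}\in[1,\overline{C}/M],
\]
and the goal is $c^{\ast}=1$. Assume instead $c^{\ast}>1$, and choose $x_n\in\R^N_+$ with $u(x_n)/(c^{\ast} U(x_n))\to 1$. I would separate the case in which $\{x_n\}$ clusters at a point in the open half space (\emph{contact case}) from the three escape scenarios $x_{n,N}\to 0^+$, $x_{n,N}\to+\infty$, $|x_n'|\to\infty$. In every escape case I reduce to the contact case by passing to
\[
\tilde u_n(y):=\lambda_n^{-\beta}\,u\bigl(\lambda_n y+(x_n',0)\bigr),\qquad\lambda_n:=x_{n,N},
\]
which is still a solution of \eqref{problem} in $\R^N_+$ satisfying \eqref{grow223324} with the same constants, by the invariances. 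Interior $C^{1,\alpha}$ regularity for $-\Delta_p$---valid on compacta of $\R^N_+$, where \eqref{grow223324} keeps $u$ bounded away from $0$ and $\infty$---produces a $C^{1,\alpha}_{\mathrm{loc}}(\R^N_+)$ subsequential limit $U_{\ast}$, again a solution with $U_{\ast}\leq c^{\ast} U$ and $U_{\ast}(0,1)=c^{\ast} U(0,1)$. Applying the contact case analysis to $U_{\ast}$: the strong comparison principle for the singular $p$-Laplacian, at the interior, non-degenerate touching point $(0,1)$, forces $U_{\ast}\equiv c^{\ast} U$ throughout the connected component, contradicting that $U_{\ast}$ solves \eqref{problem} while $c^{\ast} U$ is a strict supersolution. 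Hence $c^{\ast}=1$, proving $u\leq U$. The symmetric argument with $c_{\ast}:=\sup\{c\leq 1\,:\,u\geq cU\}$ and strict subsolutions $cU$, $c<1$, yields $u\geq U$.

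The main obstacle is the strong-comparison step. One needs a strong comparison principle valid for the singular source $u^{-\gamma}$ and for the degenerate operator $-\Delta_p$. Fortunately $\nabla(c^{\ast} U)$ is nowhere zero in the open half space, so around $(0,1)$ the linearization of $-\Delta_p$ along the segment joining $\nabla U_{\ast}$ and $\nabla(c^{\ast} U)$ is uniformly elliptic; moreover $u\mapsto u^{-\gamma}$ is locally Lipschitz on the relevant range since \eqref{grow223324} keeps both $U_{\ast}$ and $c^{\ast} U$ bounded away from $0$ near $(0,1)$. The standard strong maximum principle (with a zero-order term of arbitrary sign, applied to $\psi:=-(c^{\ast}U-U_{\ast})\leq 0$ attaining interior maximum $0$) then delivers $U_{\ast}\equiv c^{\ast} U$ locally, and a connectedness/propagation argument extends the identity to all of $\R^N_+$.
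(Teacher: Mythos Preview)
Your argument is correct and takes a genuinely different route from the paper. The paper proceeds in two steps: first it proves a 1D--symmetry result (their Theorem~\ref{1D}) by comparing $u$ with its horizontal translates $u_\tau(x)=u(x-\tau e_1)$ via integral estimates with cut--offs and an iteration lemma from \cite{FMS}; then it reduces to the ODE on $\R^+$ and proves uniqueness there (their Proposition~\ref{ODEuniqueness}) by a similar energy argument together with the derivative bound of Lemma~\ref{derbehaviorODE}. Nowhere does the paper invoke a strong comparison principle. Your approach instead slides the explicit family $cU$ directly against $u$, exploits the scaling and horizontal translation invariance of \eqref{problem} to normalize any escaping sequence of near--contact points to $e_N$, passes to a limit $U_\ast$ by interior $C^{1,\alpha}$ compactness, and then applies the strong comparison principle at the interior touching point, where $\nabla(c^\ast U)\neq 0$ guarantees uniform ellipticity of the linearized operator. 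Your route is shorter and avoids the separate 1D--symmetry step altogether; the paper's route is more self--contained (it sidesteps the strong comparison principle for the $p$--Laplacian, which is itself a non--trivial ingredient) and delivers the 1D--symmetry statement as a result of independent interest. One cosmetic point: your displayed identity should read $-\Delta_p(cU)-(cU)^{-\gamma}=c^{-\gamma}(c^{p-1+\gamma}-1)\,U^{-\gamma}$; the missing factor $c^{-\gamma}$ does not affect the sign analysis or the argument.
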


The H\"{o}pf boundary lemma is a fundamental tool in many applications. We exploit it here to develop the \emph{moving plane method} (see \cite{A,BN,GNN,serrin}) for problem \eqref{problem1} obtaining the following:

\begin{thm}\label{symmetrybdd}
Let $\Omega$ be a bounded smooth
domain of $\R^N$ which is strictly convex in the
$x_1$-direction and symmetric with respect to the hyperplane $\{x_1
= 0\}$. Let $u \in C^{1,\alpha}(\Omega) \cap
C(\overline{\Omega})$ be a positive solution of problem
\eqref{problem1} with $f(s)>0$ for $s>0$ ($f(0)\geq 0$). Then it follows that u is symmetric with respect
to the hyperplane $\{x_1 = 0\}$ and increasing in the
$x_1$-direction in $\Omega \cap \{x_1 < 0\}$.\\
\noindent In particular if the domain is a ball, then the solution is radial and radially decreasing.
\end{thm}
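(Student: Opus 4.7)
The strategy is the classical moving plane method of Gidas--Ni--Nirenberg and Serrin, adapted to the quasilinear singular setting in the spirit of Damascelli--Pacella--Sciunzi: Theorem \ref{Hopf} will supply, near $\partial\Omega$, the piece of information that in the smooth case is a direct consequence of $u\in C^{1}(\overline{\Omega})$. Set $a:=\inf_{x\in\Omega}x_{1}<0$ and, for $\lambda\in(a,0]$, let $T_{\lambda}=\{x_{1}=\lambda\}$, $\Omega_{\lambda}=\Omega\cap\{x_{1}<\lambda\}$, $x^{\lambda}=(2\lambda-x_{1},x_{2},\dots,x_{N})$ and $u_{\lambda}(x):=u(x^{\lambda})$. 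Since the problem is invariant under reflection through $T_{\lambda}$, the function $u_{\lambda}$ solves \eqref{problem1} in the reflected cap. The goal is to show that
\[
\Lambda_{0}:=\sup\bigl\{\lambda\in(a,0]\,:\,u\le u_{\mu}\text{ in }\Omega_{\mu}\text{ for every }\mu\in(a,\lambda]\bigr\}
\]
equals $0$; the symmetry statement then follows by running the same argument from the opposite side, and strict monotonicity is a by-product of the step that rules out $\Lambda_{0}<0$.

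\textbf{Starting step.} For $\lambda$ close to $a$ the cap $\Omega_{\lambda}$ has arbitrarily small measure. Testing the weak formulations of \eqref{problem1} for $u$ and $u_{\lambda}$ against $\varphi=(u-u_{\lambda})^{+}$, which is admissible because it vanishes on $T_{\lambda}$ (where $u=u_{\lambda}$) and on $\partial\Omega\cap\overline{\Omega_{\lambda}}$ (where $u=0\le u_{\lambda}$, so $\varphi$ is supported away from the set where $\nabla u$ blows up), subtracting and exploiting the monotonicity of $\xi\mapsto|\xi|^{p-2}\xi$ together with the crucial sign $u^{-\gamma}-u_{\lambda}^{-\gamma}\le 0$ on $\{u>u_{\lambda}\}$, we are left with
\[
\int_{\Omega_{\lambda}\cap\{u>u_{\lambda}\}}\bigl(|\nabla u|^{p-2}\nabla u-|\nabla u_{\lambda}|^{p-2}\nabla u_{\lambda}\bigr)\cdot\nabla(u-u_{\lambda})\,dx\le L\int_{\Omega_{\lambda}}\bigl((u-u_{\lambda})^{+}\bigr)^{2}\,dx,
\]
with $L$ a Lipschitz constant for $f$ on $[0,\|u\|_{\infty}]$. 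A Poincar\'e inequality on $\Omega_{\lambda}$, whose constant is small when $|\Omega_{\lambda}|$ is small, then forces $(u-u_{\lambda})^{+}\equiv 0$, so $\Lambda_{0}$ is well defined.

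\textbf{Propagation up to the symmetry hyperplane.} By continuity $u\le u_{\Lambda_{0}}$ in $\Omega_{\Lambda_{0}}$. Assume by contradiction $\Lambda_{0}<0$. The strong maximum principle for the $p$-Laplacian (using again the monotonicity of $s\mapsto s^{-\gamma}$ together with the positivity of $f$) upgrades the inequality to the strict one $u<u_{\Lambda_{0}}$ in the interior of $\Omega_{\Lambda_{0}}$. At every point $x_{0}\in\partial\Omega\cap\overline{\Omega_{\Lambda_{0}}}$ with $(x_{0})_{1}<\Lambda_{0}$, strict convexity of $\Omega$ in the $x_{1}$-direction forces the inward normal $\eta(x_{0})$ to satisfy $(e_{1},\eta(x_{0}))\ge\beta$ for some uniform $\beta>0$, so Theorem \ref{Hopf} (applied with $\nu=e_{1}$) provides $\partial_{x_{1}}u>0$ in a one-sided neighbourhood $I_{\delta}(\partial\Omega)\cap\Omega_{\Lambda_{0}}$. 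This boundary-layer monotonicity, together with the previously established strict interior inequality, produces the separation $u<u_{\Lambda_{0}+\e}$ in a neighbourhood of $\partial\Omega\cap\overline{\Omega_{\Lambda_{0}+\e}}$ for $\e>0$ small. On a compact set $K_{\e}\Subset\Omega_{\Lambda_{0}}$ obtained from $\Omega_{\Lambda_{0}+\e}$ by removing this boundary layer together with a tubular neighbourhood of $T_{\Lambda_{0}+\e}$ of arbitrarily small measure, the strict inequality $u<u_{\Lambda_{0}}$ plus uniform continuity yields $u\le u_{\Lambda_{0}+\e}$; on the remaining, small-measure region, the thin-cap comparison of the first step applies. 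Thus $u\le u_{\Lambda_{0}+\e}$ on all of $\Omega_{\Lambda_{0}+\e}$, contradicting the maximality of $\Lambda_{0}$.

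\textbf{Main obstacle.} The decisive difficulty, and the reason Theorem \ref{Hopf} is indispensable, lies in the propagation step: the classical scheme extracts the sign of $\partial_{x_{1}}u$ in a boundary neighbourhood from $u\in C^{1}(\overline{\Omega})$, which fails here because $\nabla u$ blows up at $\partial\Omega$ and $u\notin W^{1,p}_{0}(\Omega)$. Our singular H\"{o}pf lemma is exactly the substitute that certifies $\partial_{x_{1}}u>0$ in the boundary layer under the correct angular condition between $e_{1}$ and $\eta(x)$, which is in turn guaranteed by the strict convexity of $\Omega$ in the $x_{1}$-direction. Once $\Lambda_{0}=0$ is established, the symmetry and monotonicity follow by the usual reflection argument, and the ball case is an immediate corollary since the problem is invariant under all rotations about the origin.
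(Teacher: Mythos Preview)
Your overall moving-plane architecture matches the paper, and you correctly identify Theorem~\ref{Hopf} as the replacement for $C^{1}$-regularity up to the boundary. However, two genuine gaps separate your sketch from a proof in the quasilinear setting.

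\textbf{Starting step.} You launch the procedure with a small-domain comparison: test with $(u-u_\lambda)^+$, drop the singular term by monotonicity, bound the $f$-contribution by a Lipschitz constant, and close via Poincar\'e. This is the semilinear scheme and does not carry over to the $p$-Laplacian as written. The left-hand side controls only the weighted quantity $\int (|\nabla u|+|\nabla u_\lambda|)^{p-2}|\nabla(u-u_\lambda)^+|^2$, not $\int|\nabla(u-u_\lambda)^+|^2$, so a plain Poincar\'e inequality against the $L^2$ right-hand side does not absorb. The correct tool here is the weak comparison principle in small domains of \cite{DSJDE}, which handles the weight. There is also an admissibility issue you brush past: near the corner set $T_\lambda\cap\partial\Omega$ both $u$ and $u_\lambda$ vanish, the support of $(u-u_\lambda)^+$ need not stay a positive distance from $\partial\Omega$, and $\nabla u$ is unbounded there, so $(u-u_\lambda)^+\in W^{1,p}_0(\Omega_\lambda)$ is not automatic. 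The paper sidesteps both problems by using Theorem~\ref{Hopf} \emph{directly} to start: for $\lambda-a$ small the whole of $\Omega_\lambda\cup R_\lambda(\Omega_\lambda)$ lies in the boundary collar, where $\partial_{x_1}u>0$, and $u<u_\lambda$ follows without any comparison argument.

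\textbf{Propagation: the critical set.} Your sentence ``the strong maximum principle for the $p$-Laplacian \dots upgrades the inequality to the strict one $u<u_{\Lambda_0}$ in the interior of $\Omega_{\Lambda_0}$'' is where the argument actually fails. For the $p$-Laplacian the strong comparison principle is only available on the set $\{\nabla u\neq 0\}$; at critical points the operator degenerates and no strong comparison is known. The paper therefore first invokes Theorem~\ref{Hopf} to ensure $Z_u:=\{\nabla u=0\}\subset\subset\Omega$, then appeals to \cite{DSJDE} for the structural facts $|Z_u|=0$ and ``$\Omega\setminus Z_u$ is connected''. Strict inequality is obtained only on $\Omega_{\lambda_0}\setminus Z_u$, component by component, and the alternative $u\equiv u_{\lambda_0}$ on some component is ruled out precisely by the connectedness of $\Omega\setminus Z_u$ (it would create a local symmetry region). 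The compact $K$ in the continuation step must then be chosen inside $\Omega_{\lambda_0}\setminus Z_u$, and $|Z_u|=0$ is what allows $|\Omega_{\lambda_0+\nu}\setminus K|$ to be made small, so that the small-domain comparison of \cite{DSJDE} applies on the remainder. None of this is optional for $p\neq 2$; without it the step from $u\le u_{\Lambda_0}$ to $u<u_{\Lambda_0}$ is unjustified.
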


The proof is based on the fact that the H\"{o}pf type boundary lemma (Theorem \ref{Hopf}) allows to exploit and improve the results in \cite{pace,DSJDE}.
The key point is the fact that the monotonicity of the solution near the boundary is provided by the H\"{o}pf  lemma and the moving plane procedure can be exploited working in the interior of the domain where the nonlinearity is no more singular. In the semilinear case, a similar result is proved in \cite{masha,montrucchio}.

For the reader's convenience we sketch the proofs here below.
\begin{itemize}
\item[-] In Section \ref{sec2} we prove 1D-symmetry result in half spaces for problem \eqref{problem}, see Theorem \ref{1D}. Mainly we develop a comparison principle to compare the solution $u$ and it's translation $u_\tau:=u(x-\tau e_1)$. Even if the source term is decreasing, a quite technical approach is needed because the operator is nonlinear and we are reduced to work in unbounded domains. The 1D-symmetry result obtained  leads us to the study of a one dimensional problem in $\mathbb R^+$. We carry out this analysis proving a uniqueness result (see Proposition \ref{ODEuniqueness}) that provides, as a corollary, the proof of Theorem \ref{uniqueness}.

\item[-] Section \ref{sec3} is the core of the paper. We prove here Theorem \ref{Hopf} developing the scaling argument that leads to the problem in the half space. To this aim we strongly exploit the asymptotic estimates deduced in Theorem \ref{estbelowabove}. The proof follows by contradiction thanks to the classification result Theorem \ref{uniqueness}.

\item[-] Finally, in Section \ref{sec4}, we apply our H\"{o}pf type boundary lemma to prove the symmetry and monotonicity result stated in Theorem \ref{symmetrybdd}. The proof is based on the joint use of the \emph{moving plane method} and the monotonicity information near the boundary provided by Theorem \ref{Hopf} that allow to avoid the region where the problem is singular.

\item[-] In the Appendix we prove Lemma \ref{derbehaviorODE}, that is a very useful tool in the ODE analysis. Moreover we run through again, in the quasilinear setting, the technique of \cite{lazer} to provide asymptotic estimates for the solutions near the boundary in terms of the distance function, see Theorem \ref{estbelowabove}. Finally we prove Lemma \ref{wcpbd} that is a weak comparison principle in bounded domain that we used in the proof of Theorem \ref{estbelowabove}.
\end{itemize}

\section{1D symmetry in the half space, ODE analysis and proof of Theorem \ref{uniqueness}}\label{sec2}
Solutions to $p$-Laplace equations are generally of class $C^{1,\alpha}$, see \cite{DB,T}.
Therefore a solution to \eqref{problem1} has
to be understood in the weak distributional meaning  taking into account the singular nonlinearity. We state the following:
\begin{defn}\label{weakformbounded}  We say that $u \in W^{1,p}_{loc}(\Omega) \cap C(\overline\Omega)$ is a (positive) weak
solution to problem \eqref{problem1} if
\begin{equation}\label{problem1weaksol}
\int_{\Omega} |\nabla u|^{p-2}(\nabla u, \nabla \varphi) \, dx \, =
\int_{\Omega} \frac{ \varphi}{u^\gamma} \, dx \, + \int_\Omega f(u)
\varphi \, dx \qquad \forall \varphi \in C_c^\infty(\Omega).
\end{equation}
 We say that
$u \in W^{1,p}_{loc}(\Omega) \cap C(\overline\Omega)$ is a weak
\emph{subsolution} of problem \eqref{problem1} if
\begin{equation}\label{problem1weaksubsol}
\int_{\Omega} |\nabla u|^{p-2}(\nabla u, \nabla \varphi) \, dx \,
\leq \int_{\Omega} \frac{\varphi}{u^\gamma}  \, dx \, + \int_\Omega
f(u) \varphi \, dx \qquad \forall \varphi \in C_c^\infty(\Omega), \,
\varphi \geq 0.
\end{equation}
Similarly, we say that $u \in W^{1,p}_{loc}(\Omega) \cap C(\overline\Omega)$ is a weak \emph{supersolution} of problem
\eqref{problem1}  if
\begin{equation}\label{problem1weaksupersol}
\int_{\Omega} |\nabla u|^{p-2} (\nabla u, \nabla \varphi) \, dx \,
\geq \int_{\Omega} \frac{\varphi}{u^\gamma} \, dx \, + \int_\Omega
f(u) \varphi \, dx \qquad \forall \varphi \in C_c^\infty(\Omega), \,
\varphi \geq 0.
\end{equation}
\end{defn}

In the following we further use the following  inequalities:
$\forall \eta, \eta' \in \mathbb{R}^N$ with $|\eta|+|\eta'|>0$ there
exists positive constants $C_1, C_2,C_3,C_4$ depending on $p$ such
that
\begin{equation}\label{eq:inequalities}
\begin{split}
[|\eta|^{p-2}\eta-|\eta'|^{p-2}\eta'][\eta- \eta'] &\geq C_1
(|\eta|+|\eta'|)^{p-2}|\eta-\eta'|^2, \\ \\
\|\eta|^{p-2}\eta-|\eta'|^{p-2}\eta '| & \leq C_2
(|\eta|+|\eta'|)^{p-2}|\eta-\eta '|,\\\\
[|\eta|^{p-2}\eta-|\eta'|^{p-2}\eta '][\eta-\eta '] & \geq C_3
|\eta-\eta '|^p \quad \quad\mbox{if}\quad p\geq 2,\\\\
\|\eta|^{p-2}\eta-|\eta'|^{p-2}\eta '| & \leq C_4 |\eta-\eta
'|^{p-1} \quad\mbox{if}\quad 1 < p \leq 2.
\end{split}
\end{equation}

\begin{thm}\label{1D}
Let $\gamma > 1$ and let $u \in C^{1,\alpha}(\R^N_+) \cap
C(\overline{\R^N_+})$ be a solution to problem \eqref{problem} such
that
\begin{equation}\label{grow2233245555}
\underline{c} x_N^\beta \leq u(x) \leq \overline{C} x_N^\beta  \quad
\forall x \in \R_+^N
 \end{equation}with $\displaystyle
\beta:=\frac{p}{\gamma+p-1}$. Then
\begin{equation}\label{1deq}
u_{x_i} \equiv 0
\end{equation}
for every $i=1,\dots,N-1$. Namely, $u(x)=u(x_N).$
\end{thm}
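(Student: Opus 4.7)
The plan is to apply a sliding-type comparison argument: for every $\tau \in \R$ and every $i \in \{1,\dots,N-1\}$, I would show that $u$ coincides with its $e_i$-translate $u_\tau(x) := u(x - \tau e_i)$. Since the equation \eqref{problem}, the boundary condition, and the growth bounds \eqref{grow2233245555} are all invariant under translations perpendicular to $e_N$, the translate $u_\tau$ is another positive solution of \eqref{problem} satisfying the same pointwise bounds. Once $u \equiv u_\tau$ is established for every $\tau \in \R$, the identity forces $u_{x_i} \equiv 0$, which is precisely \eqref{1deq}.

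To prove $u \equiv u_\tau$ it suffices to show $u \le u_\tau$, since the reverse inequality follows by swapping the roles of $u$ and $u_\tau$ (equivalently, by replacing $\tau$ with $-\tau$). For this I would establish a weak comparison principle exploiting the strict monotonicity of $t \mapsto t^{-\gamma}$. Namely, subtracting the weak formulations satisfied by $u$ and $u_\tau$ and testing the resulting identity with
\[
\psi := T_k\bigl((u - u_\tau)^+\bigr)\varphi_R^p,
\]
where $T_k(s) := \min\{s,k\}$ is a truncation and $\varphi_R$ is a standard cutoff ($\varphi_R \equiv 1$ on $B_R$, $\operatorname{supp}\varphi_R \subset B_{2R}$, $|\nabla \varphi_R| \le C/R$). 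The truncation $T_k$ handles the gradient blowup of $u, u_\tau$ near $\partial \R^N_+$, while $\varphi_R$ handles the unboundedness of $\R^N_+$, making $\psi$ admissible despite the lack of $W^{1,p}$ regularity up to the boundary.

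The right-hand side then reduces to $\int (u^{-\gamma} - u_\tau^{-\gamma})\, T_k((u-u_\tau)^+)\,\varphi_R^p \,dx \le 0$ by monotonicity of $t \mapsto t^{-\gamma}$. Expanding the left-hand side via Leibniz and using \eqref{eq:inequalities} isolates a non-negative principal term
\[
\int_{\{0 < u - u_\tau < k\}} \bigl\langle |\nabla u|^{p-2}\nabla u - |\nabla u_\tau|^{p-2}\nabla u_\tau,\; \nabla(u - u_\tau)\bigr\rangle \,\varphi_R^p \,dx \ge 0,
\]
together with an error term involving $\nabla \varphi_R$. Using the upper bound in \eqref{eq:inequalities} and Young's inequality, a small fraction of the principal term can be made to absorb part of the error, leaving a remainder supported on the annulus $\{R < |x| < 2R\}$.

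The hard part will be showing that this remainder vanishes as $R \to \infty$. Because $u, u_\tau \sim x_N^\beta$ grow in $x_N$ and $\R^N_+$ is unbounded, a naive isotropic cutoff is not strong enough; one must exploit the fact that the translation is purely in the $e_i$-direction and therefore the difference $u - u_\tau$ inherits no growth beyond the common bound $(\overline{C} - \underline{c})x_N^\beta$, combined with the sharp gradient estimates in the spirit of Theorem \ref{estbelowabove} (which control $|\nabla u|, |\nabla u_\tau|$ essentially in terms of $x_N^{\beta - 1}$). Once this estimate is in place, sending $R \to \infty$ gives $T_k((u - u_\tau)^+) \equiv 0$ on each $B_R$; removing the truncation ($k \to \infty$) yields $(u - u_\tau)^+ \equiv 0$. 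The symmetric argument then produces $u \equiv u_\tau$ for every $\tau \in \R$, and repeating for each $i = 1,\dots, N-1$ completes the proof.
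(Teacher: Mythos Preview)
Your overall strategy---compare $u$ with its horizontal translate $u_\tau$ by testing the difference of the two equations with a cutoff of $(u-u_\tau)^+$ and exploiting the monotonicity of $t\mapsto t^{-\gamma}$---is exactly the one the paper uses. However, you have correctly located, but not resolved, the real difficulty: with the test function $T_k\bigl((u-u_\tau)^+\bigr)\varphi_R^p$ the annular remainder does \emph{not} vanish as $R\to\infty$. The support of $(u-u_\tau)^+$ is a priori all of $\R^N_+$, the annulus $B_{2R}\setminus B_R$ has measure $\sim R^N$, and dividing by powers of $R$ coming from $\nabla\varphi_R$ does not compensate. Your remark that $|u-u_\tau|\le(\overline C-\underline c)x_N^\beta$ still allows growth in $x_N$, and even the sharper bound $|u-u_\tau|\lesssim\tau\,x_N^{\beta-1}$ coming from the gradient estimate (which you allude to but do not derive) only bounds the function without shrinking its support.

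The paper supplies precisely the missing device. After first proving the scaling gradient estimate $|\nabla u(x)|\le C\,x_N^{-(\gamma-1)/(\gamma+p-1)}$, it argues by contradiction (assuming $u_{x_1}(P_0)>0$), sets $S:=\sup(u-u_\tau)\in(0,\infty)$, and tests with
\[
w_{\tau,\varepsilon}:=\bigl[u-u_\tau-(S-\varepsilon)\bigr]^+
\]
for small $\varepsilon>0$. The point of subtracting $S-\varepsilon$ is that the growth bound near $x_N=0$ and the decay $|u-u_\tau|\le C\tau\,x_N^{-(\gamma-1)/(\gamma+p-1)}$ for large $x_N$ force $\operatorname{supp}w_{\tau,\varepsilon}\subset\{\hat k\le x_N\le\hat K\}$, a fixed horizontal slab. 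On this slab $|\nabla u|+|\nabla u_\tau|$ is uniformly bounded, the cutoff $\varphi_R$ can be taken in the $x'$-variables only, and one can apply a Poincar\'e inequality in the $x_N$-direction. Even so, the remainder is not shown to tend to zero directly; one obtains an inequality of the form $\mathcal L(R)\le\vartheta\,\mathcal L(2R)+g(R)$ with $\vartheta$ small, and concludes via a dyadic iteration lemma (Lemma~2.1 of \cite{FMS}). Your proposal contains none of these three ingredients (the shifted positive part, the slab localization, the iteration), and without them the step you label ``the hard part'' does not go through.
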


\begin{proof}[Proof of Theorem \ref{1D}] We start with a gradient estimate showing that
	\begin{equation}\label{gradientestimate}
		|\nabla u(x)| \leq
		\frac{C_b}{x_N^\frac{\gamma-1}{\gamma+p-1}}\,.
	\end{equation}
	To prove this fact we use the notation $x = (x_1,...,x_N)=(x',x_N)
	\in \R^N$ and, with no loss of generality
	we  consider a point
	$P_c:=(0',x_N^c)$.  Setting
	$$w(x):=\frac{u(x_N^c\cdot x)}{(x_N^c)^\beta}$$
	it follows that
	\begin{equation}\label{scalinggradient}
		-\Delta_p w = \frac{1}{w^\gamma}\qquad  \text{in}\,\,\R^N_+\,.
	\end{equation}
	We restrict our attention to the problem
	\begin{equation}\label{diben}
		\begin{cases} \displaystyle
			-\Delta_p w= \frac{1}{w^\gamma} & \text{in}\,\, B_{\frac{1}{2}}(0',1)  \\
			w> 0 &  \text{in}\,\, B_{\frac{1}{2}}(0',1)
		\end{cases}
	\end{equation}
	so that, by \eqref{grow2233245555}, it follows that $w$ is bounded and  $\frac{1}{w^\gamma} \in
	L^\infty(B_{\frac{1}{2}}(0',1))$. Therefore,  by standard $C^{1,\alpha}$ estimates
	\cite{DB,T}, we deduce  that
	$$\|w\|_{C^1(B_{\frac{1}{4}}(0',1))} \leq
	C_b.$$
	Scaling back we get \eqref{gradientestimate}.

	Arguing by contradiction, without loss of generality, we assume that
	there exists $P_0 \in \R^N$ such that $u_{x_1}(P_0) > 0$. Hence
	there exists $\delta>0$ sufficiently small such that $u_{x_1}(x)>0$ for all $x \in B_\delta(P_0)$. Now we define
	\begin{equation}\label{translation}
		u_\tau(x):=u(x-\tau e_1)
	\end{equation}
	where $0<\tau < \delta$. Hence by the Mean Value Theorem it follows
	that
	\begin{equation}\label{kgjklfgjfgjò}
		u(P_0)-u_\tau(P_0)=u_{x_1}(\xi) \tau > \hat{C} \tau > 0
	\end{equation}
	where $\xi \in \{ t P_0 + (1-t)(P_0-\tau e_1), t \in [0,1]  \}$.
	Moreover, there exists $k>0$ sufficiently large such that, by the
	Mean Value Theorem and \eqref{gradientestimate}, we have
	\begin{equation}\label{meanvalue}
		|u-u_\tau|\leq
		\frac{\check{C}\tau}{x_N^\frac{\gamma-1}{\gamma+p-1}} \quad
		\text{in} \; \R^N_+ \cap \{x_N \geq k\}.
	\end{equation}
	Now we set
	\begin{equation}\label{supremum}
		S:=\sup_{x \in \R^N_+} (u-u_\tau)>0.
	\end{equation}
	We also note that $S < + \infty$ by \eqref{grow2233245555} and
	\eqref{meanvalue}. Let us consider
	\begin{equation}\label{positivepart}
		w_{\tau,\varepsilon}(x):=[u-u_\tau-(S-\varepsilon)]^+
	\end{equation}
	for every $\varepsilon>0$ small enough. We notice that, by \eqref{grow2233245555} and
	\eqref{meanvalue},
	\begin{equation}\label{XXX}
		\text{supp}(w_{\tau, \varepsilon}) \subset
		\subset  \{\hat k\leq x_N \leq \hat{K}\}
	\end{equation}
	for some $\hat{k},\hat K>0$. We
	consider a standard cutoff function $\varphi_R:=\varphi_R(x')$ such
	that $\varphi_R=1$ in $B'_R(0)$, $\varphi_R=0$ in $(B'_{2R}(0))^c$
	and $|\nabla \varphi_R| \leq \frac{2}{R}$ in $B'_{2R}(0) \setminus
	B'_R(0)$, where $B'_R(0)$ denotes the $(N-1)$-dimensional ball of
	center $0$ and radius $R$.

	We distinguish two cases:
	
	\
	
	\noindent {\bf {Case 1: $1<p <2$.}} We set
	\begin{equation}\label{test}
		\psi:=w_{\tau, \varepsilon}^\alpha \varphi_R^2
	\end{equation}
	where $\alpha >0$, $w_{\tau,\varepsilon}$ is defined in
	\eqref{positivepart} and $\varphi_R$ is the  cutoff function
	defined here above. First of all we notice that $\psi$ belongs to
	$W^{1,p}_0(\R^N_+)$. By density argument we can take $\psi$ as test
	function in the weak formulation of problem \eqref{problem}, see
	Definition \ref{weakformbounded}, so that,  subtracting the equation for
	$u$ and $u_\tau$, we obtain
	\begin{equation}\label{starteqpminhi}
		\begin{split}
			&\alpha \int_{\R^N_+ \cap \text{supp}(\psi)} (|\nabla u|^{p-2}
			\nabla u - |\nabla u_\tau|^{p-2} \nabla u_\tau, \nabla w_{\tau,
				\varepsilon})
			w_{\tau, \varepsilon}^{\alpha-1} \varphi_R^2 \, dx \\
			=&-2 \int_{\R^N_+ \cap \text{supp}(\psi)} (|\nabla u|^{p-2} \nabla u
			- |\nabla u_\tau|^{p-2} \nabla u_\tau, \nabla \varphi_R) w_{\tau,
				\varepsilon}^\alpha \varphi_R \, dx\\
			& + \int_{\R^N_+ \cap \text{supp}(\psi)} \left( \frac{1}{u^\gamma}
			-\frac{1}{u_\tau^\gamma}\right) w_{\tau, \varepsilon}^\alpha
			\varphi_R^2 \, dx\,.
		\end{split}
	\end{equation}
	From \eqref{starteqpminhi}, using \eqref{eq:inequalities} and the
	Mean Value Theorem, we obtain
	\begin{equation}\label{middlediseq1}
		\begin{split}
			&\alpha C_1 \int_{\R^N_+ \cap \text{supp}(\psi)} \left(|\nabla u| +
			|\nabla u_\tau|\right)^{p-2} |\nabla w_{\tau, \varepsilon}|^2
			w_{\tau, \varepsilon}^{\alpha-1} \varphi_R^2 \, dx \\
			&\leq \alpha \int_{\R^N_+ \cap \text{supp}(\psi)} (|\nabla u|^{p-2}
			\nabla u - |\nabla u_\tau|^{p-2} \nabla u_\tau, \nabla w_{\tau,
				\varepsilon})
			w_{\tau, \varepsilon}^{\alpha-1} \varphi_R^2 \, dx\\
			&= -2 \int_{\R^N_+ \cap \text{supp}(\psi)} (|\nabla u|^{p-2} \nabla
			u - |\nabla u_\tau|^{p-2} \nabla u_\tau, \nabla \varphi_R) w_{\tau,
				\varepsilon}^\alpha \varphi_R \, dx\\
			&+ \int_{\R^N_+ \cap \text{supp}(\psi)} \left( \frac{1}{u^\gamma} -
			\frac{1}{u_\tau^\gamma}\right) w_{\tau, \varepsilon}^\alpha
			\varphi_R^2 \, dx\\
			&\leq 2C_4 \int_{\R^N_+ \cap \text{supp}(\psi)} |\nabla
			(u-u_\tau)|^{p-1} |\nabla \varphi_R| w_{\tau,\varepsilon}^\alpha
			\varphi_R \, dx - \gamma \int_{\R^N_+ \cap \text{supp}(\psi)}
			\frac{1}{\xi^{\gamma+1}} (u-u_\tau)w_{\tau,\varepsilon}^\alpha
			\varphi_R^2 \, dx
		\end{split}
	\end{equation}
	where $\xi$ belongs to $\overrightarrow{(u,u_\tau)}$. Hence, recalling also \eqref{gradientestimate}, we deduce that
	\begin{equation}\label{middlediseq2}
		\begin{split}
			&\alpha C_1 \int_{\R^N_+ \cap \text{supp}(\psi)} \left(|\nabla u| +
			|\nabla u_\tau|\right)^{p-2} |\nabla w_{\tau, \varepsilon}|^2
			w_{\tau, \varepsilon}^{\alpha-1} \varphi_R^2 \, dx\\
			&\leq 2C_4 \int_{\R^N_+ \cap \text{supp}(\psi)} |\nabla
			(u-u_\tau)|^{p-1} |\nabla \varphi_R| w_{\tau,\varepsilon}^\alpha
			\varphi_R \, dx - \gamma \int_{\R^N_+ \cap \text{supp}(\psi)}
			\frac{1}{\xi^{\gamma+1}} w_{\tau,\varepsilon}^{\alpha+1} \varphi_R^2
			\, dx \\
			& \leq \check{C}\int_{\R^N_+ \cap \text{supp}(\psi)}  |\nabla
			\varphi_R| w_{\tau,\varepsilon}^\alpha  \, dx
		\end{split}
	\end{equation}
	where $\check{C}:= 2C_4 \|\nabla
	(u-u_\tau)\varphi_R\|^{p-1}_{L^\infty(\R^N_+\cap\text{supp}(\psi))}$.
	Exploiting the weighted Young inequality with exponents
	$\displaystyle \left(\frac{\alpha+1}{\alpha}, \ \alpha+1\right)$ we
	obtain
	
	\begin{equation}\label{middlediseq2}
		\begin{split}
			&\alpha C_1 \int_{\R^N_+ \cap \text{supp}(\psi)} \left(|\nabla u| +
			|\nabla u_\tau|\right)^{p-2} |\nabla w_{\tau, \varepsilon}|^2
			w_{\tau, \varepsilon}^{\alpha-1} \varphi_R^2 \, dx\\
			& \leq  \check{C}\int_{\R^N_+ \cap \text{supp}(\psi)}  |\nabla
			\varphi_R| w_{\tau,\varepsilon}^{\alpha} \, dx\\
			&\leq \frac{\check{C}}{\sigma^{\alpha+1}(\alpha+1)}\int_{\R^N_+ \cap
				\text{supp}(\psi)} |\nabla \varphi_R|^{\alpha+1} \, dx\, +
			\frac{\check{C}\alpha}{\alpha+1}\sigma^{\frac{\alpha}{\alpha+1}}\int_{\R^N_+
				\cap \text{supp}(\psi)} w_{\tau,\varepsilon}^{\alpha+1} \, dx \,\\
			&\leq \frac{\dot{C}}{R^{\alpha-(N-2)}} +
			\frac{\check{C}\alpha}{\alpha+1}\sigma^{\frac{\alpha}{\alpha+1}}
			\int_{\R^N_+ \cap \text{supp}(\psi)}
			\left[w_{\tau,\varepsilon}^{\frac{\alpha+1}{2}}\right]^2 \, dx. \\
		\end{split}
    \end{equation}		

>From \eqref{middlediseq2} and exploiting the Poincar\'e inequality in the $x_N$-direction it follows that
	\begin{equation}\label{middlediseq3}
	    \begin{split}
	    	&\alpha C_1 \int_{\R^N_+ \cap \text{supp}(\psi)} \left(|\nabla u| +
	    	|\nabla u_\tau|\right)^{p-2} |\nabla w_{\tau, \varepsilon}|^2
	    	w_{\tau, \varepsilon}^{\alpha-1} \varphi_R^2 \, dx\\	
			&\leq \frac{\dot{C}}{R^{\alpha-(N-2)}} +
			\frac{\check{C}\alpha}{\alpha+1}\sigma^{\frac{\alpha}{\alpha+1}}
			\int_{B'_{2R}(0)} \left( \int_{\{y \leq k\}}
			\left[w_{\tau,\varepsilon}^{\frac{\alpha+1}{2}}\right]^2 \, dy
			\right) \, dx'\,\\
			&\leq\frac{\dot{C}}{R^{\alpha-(N-2)}} +
			\frac{\check{C}\alpha}{\alpha+1}\sigma^{\frac{\alpha}{\alpha+1}}
			C^2_P(k) \left(\frac{\alpha+1}{2}\right)^2  \int_{\R^N_+ \cap
				\text{supp}(\psi)} |\nabla w_{\tau, \varepsilon}|^2 w_{\tau,
				\varepsilon}^{\alpha-1}\, dx\,\\
			&\leq
			\frac{\check{C}\alpha}{\alpha+1}\sigma^{\frac{\alpha}{\alpha+1}}
			C^2_P(k) \left(\frac{\alpha+1}{2}\right)^2  \int_{\R^N_+ \cap
				\text{supp}(\psi)}\left(|\nabla u| + |\nabla
			u_\tau|\right)^{(2-p)+(p-2)} |\nabla w_{\tau, \varepsilon}|^2 w_{\tau,
				\varepsilon}^{\alpha-1}\, dx\,\\
			&+\frac{\dot{C}}{R^{\alpha-(N-2)}}
		\end{split}
	\end{equation}
	where $C_P$ is the Poincar\'e constant.
	Let us point out that, by \eqref{grow2233245555}, \eqref{XXX} and standard regularity theory \cite{DB,T}, it follows that
	\begin{equation}\label{jkgrhdkghkdjghskl}
		|\nabla u|+|\nabla
		u_\tau|\leq C\qquad\text{in}\,\,\, \text{supp}(w_{\tau, \varepsilon}) \subset
		\subset  \{\hat k\leq x_N \leq \hat{K}\}\,.
	\end{equation}
	Hence we
	have
	\begin{equation}\label{finaldiseq}
		\begin{split}
			& \int_{\mathcal{C}(R)} \left(|\nabla u| + |\nabla
			u_\tau|\right)^{p-2} |\nabla w_{\tau, \varepsilon}|^2
			w_{\tau, \varepsilon}^{\alpha-1} \, dx \\
			&\leq \vartheta \int_{\mathcal{C}(2R)} \left(|\nabla u| + |\nabla
			u_\tau|\right)^{p-2}|\nabla w_{\tau, \varepsilon}|^2 w_{\tau,
				\varepsilon}^{\alpha-1}\, dx\, + \frac{\dot{C}}{R^{\alpha-(N+2)}}
		\end{split}
	\end{equation}
	where $\displaystyle \mathcal{C}(R):= \left(\R^N_+ \cap (B'_R(0)\times\mathbb R)\right) $ and
	$\vartheta:=\frac{\check{C}\alpha}{\alpha+1}\sigma^{\frac{\alpha}{\alpha+1}}
	C^2_P(k) \left(\frac{\alpha+1}{2}\right)^2 \|(|\nabla u|+|\nabla
	u_\tau|)^{2-p}\|_\infty$. We set $$\displaystyle
	g(R):=\frac{\dot{C}}{R^{\alpha-(N+2)}}$$ and $$\displaystyle
	\mathcal{L}(R):=\int_{\mathcal{C}(R)} \left(|\nabla u| + |\nabla
	u_\tau|\right)^{p-2} |\nabla w_{\tau, \varepsilon}|^2 w_{\tau,
		\varepsilon}^{\alpha-1} \, dx$$
	so that
	\begin{equation}\nonumber
		\mathcal{L}(R)\leq \theta \mathcal{L}(2R)+g(R)\,.
	\end{equation}
	Now we fix
	$\alpha$ sufficiently large
	so that $g(R) \rightarrow 0$ as $n \rightarrow + \infty$ and,
	consequently, we take $\sigma$ small enough so that $\vartheta <
	2^{(\alpha-\gamma)\beta+1}$. This allows to exploit  Lemma 2.1 of \cite{FMS} it
	follows that $$\mathcal{L}(R)=0$$
	for any $R>0$.
	This proves that actually $w_{\tau, \varepsilon}$ is constant and therefore $w_{\tau, \varepsilon}=0$ since it vanishes near the boundary. This is a contradiction with \eqref{kgjklfgjfgjò} thus proving the result in the case $1<p<2$\,.

	\noindent {\bf {Case 2: $p \geq 2$.}} We set
	\begin{equation}\label{test}
		\psi:=w_{\tau, \varepsilon} \varphi_R^2
	\end{equation}
	with $w_{\tau,\varepsilon}$  and
	$\varphi_R$ defined as in the previous case $1<p<2$ . Arguing exactly
	as in the case $1<p<2$ we arrive to
	\begin{equation}\label{starteqpmin2}
		\begin{split}
			& \int_{\R^N_+ \cap \text{supp}(\psi)} (|\nabla u|^{p-2} \nabla u -
			|\nabla u_\tau|^{p-2} \nabla u_\tau, \nabla w_{\tau, \varepsilon})
			\varphi_R^2 \, dx \\
			=& -2 \int_{\R^N_+ \cap \text{supp}(\psi)} (|\nabla u|^{p-2} \nabla
			u - |\nabla u_\tau|^{p-2} \nabla u_\tau, \nabla \varphi_R)w_{\tau,
				\varepsilon}  \varphi_R \, dx\\
			& + \int_{\R^N_+ \cap \text{supp}(\psi)} \left( \frac{1}{u^\gamma} -
			\frac{1}{u_\tau^\gamma}\right)w_{\tau, \varepsilon} \varphi_R^2 \,
			dx
		\end{split}
	\end{equation}
	From \eqref{starteqpmin2}, using \eqref{eq:inequalities} and the
	Mean Value Theorem, we deduce that
	\begin{equation}\label{middlediseqjuhikjhgu}
		\begin{split}
			& C_1 \int_{\R^N_+ \cap \text{supp}(\psi)} \left(|\nabla u| +
			|\nabla u_\tau|\right)^{p-2} |\nabla w_{\tau, \varepsilon}|^2
			\varphi_R^2 \, dx \\
			&\leq  \int_{\R^N_+ \cap \text{supp}(\psi)} (|\nabla u|^{p-2} \nabla
			u - |\nabla u_\tau|^{p-2} \nabla u_\tau, \nabla w_{\tau,
				\varepsilon}) \varphi_R^2 \, dx\\
			&= -2 \int_{\R^N_+ \cap \text{supp}(\psi)} (|\nabla u|^{p-2} \nabla
			u - |\nabla u_\tau|^{p-2} \nabla u_\tau, \nabla \varphi_R) w_{\tau,
				\varepsilon} \varphi_R \, dx\\
			&+ \int_{\R^N_+ \cap \text{supp}(\psi)} \left( \frac{1}{u^\gamma} -
			\frac{1}{u_\tau^\gamma}\right) w_{\tau, \varepsilon}
			\varphi_R^2 \, dx\\
			&\leq 2C_2 \int_{\R^N_+ \cap \text{supp}(\psi)} \left(|\nabla u| +
			|\nabla u_\tau|\right)^{p-2} |\nabla w_{\tau, \varepsilon}|w_{\tau, \varepsilon} |\nabla
			\varphi_R| \varphi_R \, dx \\
			&- \gamma \int_{\R^N_+ \cap \text{supp}(\psi)}
			\frac{1}{\xi^{\gamma+1}} (u-u_\tau)w_{\tau,\varepsilon} \varphi_R^2
			\, dx
		\end{split}
	\end{equation}
	where $\xi$ belongs to $\overrightarrow{(u,u_\tau)}$. Exploiting the
	Young inequality to the right hand side we have
	\begin{equation}\label{middlediseqjuhikjhgsfdrgsfdsafu}
		\begin{split}
			& C_1 \int_{\R^N_+ \cap \text{supp}(\psi)} \left(|\nabla u| +
			|\nabla u_\tau|\right)^{p-2} |\nabla w_{\tau, \varepsilon}|^2
			\varphi_R^2 \, dx \\
			&\leq 2C_2 \int_{\R^N_+ \cap \text{supp}(\psi)} \left(|\nabla u| +
			|\nabla u_\tau|\right)^{p-2} |\nabla w_{\tau, \varepsilon}| \
			|\nabla \varphi_R|  w_{\tau, \varepsilon} \varphi_R \, dx \\
			&- \int_{\R^N_+ \cap \text{supp}(\psi)} \frac{1}{\xi^{\gamma+1}}
			(u-u_\tau)w_{\tau,\varepsilon} \varphi_R^2 \, dx\\
			&\leq \sigma C_2 \int_{\R^N_+ \cap \text{supp}(\psi)} \left(|\nabla
			u| +
			|\nabla u_\tau|\right)^{p-2} |\nabla w_{\tau, \varepsilon}|^2 \, dx \\
			&+ \frac{C_2}{\sigma} \int_{\R^N_+ \cap \text{supp}(\psi)}
			\left(|\nabla u| + |\nabla u_\tau|\right)^{p-2} |\nabla \varphi_R|^2
			w_{\tau,
				\varepsilon}^2 \varphi_R^2 \, dx\\
			&- \gamma \int_{\R^N_+ \cap \text{supp}(\psi)}
			\frac{1}{\xi^{\gamma+1}}
			(u-u_\tau)w_{\tau,\varepsilon} \varphi_R^2 \, dx\\
		\end{split}
	\end{equation}
	As above we shall exploit the fact that
	$|\nabla u|$ and $|\nabla u_\tau|$ are uniformly
	bounded in $\R^N_+ \cap \text{supp}(\psi)$, see \eqref{jkgrhdkghkdjghskl}. Therefore we get
	\begin{equation}\label{kjhkjhgkjh}
		\begin{split}
			& C_1 \int_{\R^N_+ \cap \text{supp}(\psi)} \left(|\nabla u| +
			|\nabla u_\tau|\right)^{p-2} |\nabla w_{\tau, \varepsilon}|^2
			\varphi_R^2 \, dx \\
			&\leq \sigma C_2 \int_{\R^N_+ \cap \text{supp}(\psi)} \left(|\nabla
			u| +
			|\nabla u_\tau|\right)^{p-2} |\nabla w_{\tau, \varepsilon}|^2 \, dx \\
			&+ \left(\frac{\check{C}}{\sigma R^2}-\dot{C}\right) \int_{\R^N_+
				\cap \text{supp}(\psi)} w_{\tau, \varepsilon}^2 \varphi_R^2 \, dx
		\end{split}
	\end{equation}
	
	where $\check{C}$ e $\dot{C}$ are positive constants. By taking $R_0
	>0$ sufficiently large it follows that $\displaystyle \frac{\check{C}}{\sigma
		R^2}-\dot{C}<0$ for every $R \geq R_0$. Hence we have
	
	\begin{equation}\label{kjhkjhgkjh}
		\begin{split}
			&\int_{\R^N_+ \cap \text{supp}(\psi)} \left(|\nabla u| + |\nabla
			u_\tau|\right)^{p-2} |\nabla w_{\tau, \varepsilon}|^2 \varphi_R^2 \,
			dx \\
			&\leq \frac{\sigma C_2}{C_1} \int_{\R^N_+ \cap \text{supp}(\psi)}
			\left(|\nabla u| + |\nabla u_\tau|\right)^{p-2} |\nabla w_{\tau,
				\varepsilon}|^2 \, dx.
		\end{split}
	\end{equation}
	As above, for
	$\displaystyle \mathcal{C}(R):= \left(\R^N_+ \cap (B'_R(0)\times\mathbb R)\right) $,  we set  $$\displaystyle
	\mathcal{L}(R):=\int_{\mathcal{C}(R)} \left(|\nabla u| + |\nabla
	u_\tau|\right)^{p-2} |\nabla w_{\tau, \varepsilon}|^2  \, dx$$
	so that
	\begin{equation}\nonumber
		\mathcal{L}(R)\leq \theta \mathcal{L}(2R)\,.
	\end{equation}
	where $\vartheta:=\frac{\sigma C_2}{C_1}>0$ is sufficiently small when $\sigma>0$ is
	sufficiently small. Applying again Lemma 2.1 of \cite{FMS} it follows
	that
	$$\int_{\mathcal{C}(R)} \left(|\nabla u| + |\nabla
	u_\tau|\right)^{p-2} |\nabla w_{\tau, \varepsilon}|^2 \, dx =0$$
	for any $R\geq R_ 0$.
	This provides a contradiction exactly as in the case $1<p<2$ so that the thesis follows also in the case $p\geq 2$.
\end{proof}

The 1D-symmetry result in Theorem \ref{1D} leads to the study of the one dimensional problem:
\begin{equation}\label{ODEeq}
\begin{cases} \displaystyle
-\left(|u'|^{p-2}u'\right)'=\frac{1}{u^\gamma}\, &  t \in \R^+ \\
u> 0 &   t \in \R^+   \\
u(0)=0
\end{cases}
\end{equation}
where $\gamma >1$ and $u \in C^{1,\alpha}(\R^+)\cap C(\R^+\cup
\{0\})$. As a consequence of the common feeling we expect uniqueness for such a problem, since the source term is decreasing. By the way the proof is not straightforward since the source term is decreasing but singular at zero. Now we present the following lemma, postponing the proof in the appendix.

\begin{lem}\label{derbehaviorODE}
Let $u\in C^{1,\alpha}(\R^+)\cap C(\R^+\cup \{0\})$ be a solution
to  \eqref{ODEeq}.  Assume that there exists a  positive
constant $C_u$  such that
\begin{equation}\label{inftybehavior}
\frac{t^\beta}{C_u}  \leq u(t) \leq  C_u t^\beta
\end{equation}
for $t$ sufficiently large and $\beta:= \frac{p}{\gamma+p-1}$. Then
there exists a positive constant $C'_u$  such that
\begin{equation}\label{inftybehaviorder}
\frac{t^{\beta-1}}{C'_u}  \leq u'(t) \leq  C'_u t^{\beta-1}
\end{equation}
for $t$ large enough.
\end{lem}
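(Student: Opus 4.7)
The plan is to extract an explicit first-integral for \eqref{ODEeq} that expresses $u'(t)$ as an algebraic function of $u(t)$. Once such a formula is in hand, the two-sided asymptotic \eqref{inftybehavior} on $u$ transfers directly to the desired bound \eqref{inftybehaviorder} on $u'$ via pure algebra.

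First I would establish the qualitative behaviour of $u'$. From \eqref{ODEeq} the function $\Phi(t):=|u'(t)|^{p-2}u'(t)$ satisfies $\Phi'(t)=-1/u(t)^\gamma<0$, so $\Phi$ is strictly decreasing on $(0,\infty)$. If $\Phi(t_0)\leq 0$ at some $t_0>0$, then $u'<0$ on $(t_0,\infty)$ and $u$ would be eventually decreasing, contradicting the lower bound $u(t)\geq t^\beta/C_u\to\infty$. Hence $u'>0$ on $(0,\infty)$, so $\Phi=(u')^{p-1}$ and therefore $u'$ itself is strictly decreasing. Being monotone and bounded below by $0$, $u'$ admits a limit $L\geq 0$ at $+\infty$; the upper bound $u(t)\leq C_u t^\beta$ with $\beta<1$ forces $u(t)/t\to 0$, so necessarily $L=0$.

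Next I would derive the first-integral. Multiplying \eqref{ODEeq} by $u'>0$ and rewriting both sides as exact derivatives yields
\begin{equation}\nonumber
-\frac{p-1}{p}\bigl((u')^p\bigr)'(t)=\frac{u'(t)}{u(t)^\gamma}=-\frac{1}{\gamma-1}\bigl(u^{1-\gamma}\bigr)'(t).
\end{equation}
Integrating from $t$ to $T$ and letting $T\to\infty$, the boundary contributions at $T$ vanish since $u'(T)\to 0$ and $u(T)^{1-\gamma}\to 0$ (here both $\beta<1$ and $\gamma>1$ enter essentially), producing the identity
\begin{equation}\nonumber
\bigl(u'(t)\bigr)^p=\frac{p}{(p-1)(\gamma-1)}\,u(t)^{1-\gamma}\qquad\text{for all }t>0.
\end{equation}

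To conclude, I would insert the hypothesis $t^\beta/C_u\leq u(t)\leq C_u t^\beta$ into the identity. Since $(1-\gamma)/p<0$ the bounds on $u$ give reversed two-sided bounds on $u^{(1-\gamma)/p}$, and the algebraic check $\beta(1-\gamma)/p=\beta-1$ (immediate from $\beta=p/(\gamma+p-1)$) yields \eqref{inftybehaviorder} with explicit constants depending only on $C_u$, $p$ and $\gamma$. The main obstacle is precisely the vanishing of the boundary term at $T=+\infty$ when integrating the first-integral, as this is what pins down the constant of integration to zero; it rests crucially on the combined effect of the growth hypothesis together with the inequalities $\beta<1$ and $\gamma>1$ built into the assumptions.
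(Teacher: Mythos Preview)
Your proof is correct, but it takes a genuinely different route from the paper's. The paper establishes the same qualitative facts ($u'>0$ everywhere and $u'(t)\to 0$) and then applies Cauchy's generalised mean value theorem to the pair $w(t)=(u')^{p-1}$ and $h(t)=t^{1-\beta\gamma}/(\beta\gamma-1)$: for each large $t$ one finds $\xi_t>t$ with
\[
\frac{w(t)}{h(t)}=\frac{w'(\xi_t)}{h'(\xi_t)}=\frac{\xi_t^{\beta\gamma}}{u(\xi_t)^\gamma},
\]
and the growth hypothesis \eqref{inftybehavior} bounds this ratio above and below, yielding $(u')^{p-1}\asymp t^{1-\beta\gamma}=t^{(\beta-1)(p-1)}$.

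Your energy identity $(u')^p=\dfrac{p}{(p-1)(\gamma-1)}\,u^{1-\gamma}$ is more direct and in fact delivers more than the lemma asks: it is an exact relation valid for all $t>0$, not merely an asymptotic estimate, and once combined with $u(0)=0$ it forces $u=Mt^\beta$, essentially absorbing the separate uniqueness argument of Proposition~\ref{ODEuniqueness}. The only point worth making explicit is the regularity needed to differentiate $(u')^p$: since $v:=(u')^{p-1}$ is $C^1$ with $v'=-u^{-\gamma}$ (directly from the equation) and $v>0$, the chain rule applied to $(u')^p=v^{p/(p-1)}$ gives $((u')^p)'=\frac{p}{p-1}\,u'\,v'$ without assuming $u\in C^2$ a priori. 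The paper's Cauchy-type argument sidesteps this differentiability bookkeeping at the cost of obtaining only two-sided bounds rather than an identity.
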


Now we are ready to prove our uniqueness result:
\begin{prop}\label{ODEuniqueness}
Problem
\eqref{ODEeq} admits a unique solution $u\in C^{1,\alpha}(\R^+)\cap C(\R^+\cup \{0\})$ satisfying \eqref{inftybehavior} given by
\begin{equation}\label{uniqueSolODE}
u(t)=Mt^\beta
\end{equation}
where  $\displaystyle M:=\left[
\frac{(\gamma+p-1)^p}{p^{p-1}(p-1)(\gamma-1)}\right]^{\frac{1}{\gamma+p-1}}$
and $\displaystyle \beta:=\frac{p}{\gamma+p-1}$.
\end{prop}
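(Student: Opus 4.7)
The plan is to extract a first-order conservation law by multiplying \eqref{ODEeq} by $u'$ and integrating, and then to recognize the resulting relation as a separable ODE whose integration from the boundary condition $u(0)=0$ forces the explicit formula \eqref{uniqueSolODE}. Existence then reduces to the direct verification that $Mt^\beta$ solves \eqref{ODEeq} with this precise constant $M$.

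The first ingredient is the sign of $u'$. Since $(|u'|^{p-2}u')'=-u^{-\gamma}<0$, the quantity $|u'|^{p-2}u'$ is strictly decreasing on $\R^+$; if $u'$ were negative at some point it would remain negative, contradicting the growth bound \eqref{inftybehavior} which forces $u(t)\to\infty$. Hence $u'>0$ on $(0,\infty)$. Multiplying \eqref{ODEeq} by $u'$ and using that $((u')^{p-1})'u'=\tfrac{p-1}{p}\tfrac{d}{dt}(u')^p$ and $u^{-\gamma}u'=\tfrac{d}{dt}\bigl[u^{1-\gamma}/(1-\gamma)\bigr]$, integration on $[t,R]$ for $0<t<R$ gives
\begin{equation*}
\frac{p-1}{p}\bigl[(u'(t))^p-(u'(R))^p\bigr]=\frac{u(R)^{1-\gamma}-u(t)^{1-\gamma}}{1-\gamma}.
\end{equation*}
As $R\to\infty$, $u(R)^{1-\gamma}\to 0$ by \eqref{inftybehavior} (since $\gamma>1$ and $u\to\infty$), and $u'(R)\to 0$ thanks to Lemma~\ref{derbehaviorODE}, which yields $u'(R)\leq C'_u R^{\beta-1}$ with $\beta-1<0$. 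Sending $R\to\infty$ produces the conservation law
\begin{equation*}
\frac{p-1}{p}(u'(t))^p=\frac{u(t)^{1-\gamma}}{\gamma-1},\qquad t>0.
\end{equation*}

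Since $u'>0$, this rewrites as $u'=K\,u^{(1-\gamma)/p}$ with $K:=\bigl(p/[(p-1)(\gamma-1)]\bigr)^{1/p}$, a separable first-order ODE. Equivalently, $\tfrac{d}{dt}\bigl[u^{(\gamma+p-1)/p}\bigr]=\tfrac{\gamma+p-1}{p}K$, so integrating from $0$ and using $u(0)=0$ yields $u(t)^{(\gamma+p-1)/p}=\tfrac{\gamma+p-1}{p}K\,t$, that is $u(t)=Mt^\beta$; a short algebraic simplification shows the resulting constant coincides with the $M$ given in \eqref{uniqueSolODE}. The main obstacle I anticipate is justifying the vanishing of the boundary terms at infinity when passing to the limit in the integrated identity: this rests crucially on the two-sided decay estimate for $u'$ supplied by Lemma~\ref{derbehaviorODE}. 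Once the first integral is in hand, both uniqueness and the explicit solution follow from an elementary separation of variables.
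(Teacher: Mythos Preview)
Your argument is correct and takes a genuinely different route from the paper's proof. The paper argues by contradiction: assuming two distinct solutions $u,v$, it tests the difference of the weak formulations against $\psi=[(u-v-\varepsilon)^+]^\alpha\varphi_R^2$, uses the inequalities \eqref{eq:inequalities} together with Lemma~\ref{derbehaviorODE} to control the resulting integrals, and then appeals to an iteration lemma (Lemma~2.1 of \cite{FMS}) to force the weighted energy $\mathcal{L}(R)$ to vanish, whence $u\le v$ and by symmetry $u\equiv v$. Only at the end does one check that $Mt^\beta$ actually solves the problem.

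Your approach instead exploits the autonomous one-dimensional structure: after observing that $u'>0$ (so $u\in C^2(0,\infty)$, since $(u')^{p-1}$ is $C^1$ with continuous derivative $-u^{-\gamma}$ and $u'>0$), you multiply by $u'$ to obtain a first integral. Lemma~\ref{derbehaviorODE} is used only to kill the boundary term $u'(R)^p$ at infinity, after which the conservation law $\tfrac{p-1}{p}(u')^p=\tfrac{1}{\gamma-1}u^{1-\gamma}$ is a separable first-order ODE that, together with $u(0)=0$, forces the explicit profile. This is shorter and more elementary, and it delivers the explicit solution and uniqueness simultaneously rather than in two steps. The paper's comparison-principle approach, on the other hand, is closer in spirit to the PDE techniques used elsewhere in the paper (in particular Theorem~\ref{1D}) and does not rely on the problem being one-dimensional and autonomous; it would adapt more readily if, say, the right-hand side carried an additional bounded weight $p(t)$.
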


\begin{proof}
Arguing by contradiction we assume that there exist two positive
solution $u, v \in C^{1,\alpha}(\R^+)\cap C(\R^+\cup \{0\})$ to
problem \eqref{ODEeq} such that $u \not \equiv v$. Let us consider the  cutoff function $\varphi_R \in C^\infty_c(\R)$, $R>0$, such
that $\varphi_R(t)=1$ if $t \in [-R,R]$, $\varphi_R(t)=0$ if $t \in
(-\infty,-2R) \cup (2R,+\infty)$ and $|\varphi'(t)|< \frac{2}{R}$
for every $t \in (-2R,-R) \cup (R,2R)$. For
$\varepsilon>0$ (small) we set
\[
w_\varepsilon=(u-v-\varepsilon)^+\qquad\text{and}\qquad\psi:=[(u-v-\varepsilon)^+]^\alpha \varphi_R^2
\]
with $\alpha >0$ (large). Passing through the weak formulation of problem
\eqref{ODEeq} for $u$ and $v$, subtracting and using  standard elliptic estimates
estimates and \eqref{eq:inequalities} we obtain
\begin{equation}\label{ODEdiseq}
\begin{split}
&\alpha C_1 \int_0^{2R} \left(|u'| + |v'|\right)^{p-2}
|w'_\varepsilon|^2 w_\varepsilon^{\alpha-1} \varphi_R^2 \, dt \leq
\alpha \int_0^{2R} (|u'|^{p-2} u' - |v'|^{p-2} v',w'_\varepsilon)
w_ \varepsilon^{\alpha-1} \varphi_R^2 \, dt\\
&= -2 \int_R^{2R} (|u'|^{p-2} u' - |v'|^{p-2} v', \varphi'_R)
w_\varepsilon^\alpha \varphi_R \, dt + \int_0^{2R} \left(
\frac{1}{u^\gamma} - \frac{1}{v^\gamma}\right) w_\varepsilon^\alpha \varphi_R^2 \, dt\\
&\leq 2C_2 \int_R^{2R} \left(|u'| + |v'|\right)^{p-2}
|w'_\varepsilon| w_\varepsilon^\alpha |\varphi'_R| \varphi_R \, dt -
\gamma \int_R^{2R} \frac{1}{\xi^{\gamma+1}}
(u-v)w_\varepsilon^\alpha \varphi_R^2 \, dt
\end{split}
\end{equation}
with $\xi\in\overrightarrow{(u,v)}$. Exploiting the
weighted Young inequality to the right hand side we have
\begin{equation}\label{ODEdiseq2}
\begin{split}
&\alpha C_1 \int_0^{2R} \left(|u'| + |v'|\right)^{p-2}
|w'_\varepsilon|^2 w_\varepsilon^{\alpha-1} \varphi_R^2 \, dt\\
&\leq L C_2 \int_R^{2R} \left(|u'| + |v'|\right)^{p-2}
|w'_\varepsilon|^2
w_\varepsilon^{\alpha-1} \, dt\\
& + \frac{C_2}{L R^2}\int_R^{2R} \left(|u'| + |v'|\right)^{p-2}
w_\varepsilon^{\alpha+1}
\varphi^2_R \, dt \\
&- \gamma \int_R^{2R} \frac{1}{\xi^{\gamma+1}}
w_\varepsilon^{\alpha+1} \varphi_R^2 \, dt
\end{split}
\end{equation}
By Lemma \ref{derbehaviorODE}  it follows that
\begin{equation}\label{ODEdiseq3}
\begin{split}
&\alpha C_1 \int_0^{2R} \left(|u'| + |v'|\right)^{p-2}
|w'_\varepsilon|^2 w_\varepsilon^{\alpha-1} \varphi_R^2 \, dt\\
&\leq L C_2 \int_R^{2R} \left(|u'| + |v'|\right)^{p-2}
|w'_\varepsilon|^2 w_\varepsilon^{\alpha-1} \, dt\\
& + \frac{\check{C}}{L} \int_R^{2R} t^{(\beta-1)(p-2)-2}
w_\varepsilon^{\alpha+1} \varphi^2_R \, dt \\
&- \dot{C} \int_R^{2R} t^{-\beta(\gamma+1)}
w_\varepsilon^{\alpha+1} \varphi_R^2 \, dt\\
&\leq L C_2 \int_R^{2R} \left(|u'| + |v'|\right)^{p-2}
|w'_\varepsilon|^2 w_\varepsilon^{\alpha-1} \, dt\\
& +
\left(\frac{\check{C}}{L}-\hat{C}\right)\frac{1}{R^{\beta(\gamma+1)}}\int_R^{2R}
w_\varepsilon^{\alpha+1} \varphi^2_R \, dt
\end{split}
\end{equation}
where we also used the fact that $t/2\leq R\leq t$ when $t\in [R,2R]$.
Now we fix $L$ sufficiently large such that $(\frac{\check{C}}{L}-\hat{C})\leq 0$ so that
\begin{equation}\label{ODEdiseq4}
\begin{split}
\int_0^{R} \left(|u'| + |v'|\right)^{p-2} |w'_\varepsilon|^2
w_\varepsilon^{\alpha-1} \, dt \leq \frac{L C_2}{\alpha C_1}
\int_0^{2R} \left(|u'| + |v'|\right)^{p-2} |w'_\varepsilon|^2
w_\varepsilon^{\alpha-1} \, dt.
\end{split}
\end{equation}

Hence we define
$$\mathcal{L}(R):=\int_0^{R} \left(|u'| + |v'|\right)^{p-2} |w'_\varepsilon|^2
w_\varepsilon^{\alpha-1} \, dt.$$
By Lemma \ref{derbehaviorODE} we deduce that $\mathcal{L}(\cdot)$ has polynomial growth, namely
$$\mathcal{L}(R) \leq C R^{(\beta-1)(p-2)} R^{2(\beta-1)} R^{\beta(\alpha-1)} \int_0^{R} \, dt = CR^{(\beta-1)p+\beta(\alpha-1)+1}=CR^{\sigma}$$
with $\sigma:= (\alpha-\gamma)\beta+1$.
We take
$\alpha>0$ sufficiently large so that $\sigma>0$ and
$\frac{\vartheta C_2}{\alpha C_1} < 2^{-\sigma}$ so that Lemma 2.1 of \cite{FMS} apply and shows that
$$\mathcal{L}(R)=0.$$
>From this it follows that $u \leq v + \varepsilon$ for every
$\varepsilon>0$, hence $u \leq v$. Arguing in the same way it follows that $u \geq v$ and this proves the uniqueness result. To conclude the proof it is now sufficient to check that the function defined
in \eqref{uniqueSolODE} solve the problem.
\end{proof}
\begin{proof}[Proof of Theorem \ref{uniqueness}]
Once that Theorem \ref{1D} is in force, the proof of Theorem \ref{uniqueness} is a consequence of Proposition \ref{ODEuniqueness}.
\end{proof}

\section{Asymptotic analysis near the boundary and proof of Theorem \ref{Hopf}}\label{sec3}
We start this section considering the auxiliary problem:
\begin{equation}\label{auxprob}
\begin{cases} \displaystyle
-\Delta_p u =  \frac{p(x)}{u^\gamma} & \text{in}\,\, \mathcal{D} \\
u> 0 &  \text{in}\,\,\mathcal{D}
\end{cases}
\end{equation}
where $\mathcal{D}$ is a bounded smooth domain of $\R^N$, where $p
\in L^\infty(\mathcal{D})$ and $p(x)\geq c>0$ a.e. in $\mathcal{D}$,
$\gamma
>1$ and $u \in W^{1,p}_{loc}(\mathcal{D})\cap
C^0(\overline{\mathcal{D}})$.
For this kind of problems, generally, the weak comparison principle holds true. This is manly due to the monotonicity properties of the source term. In spite of this remark, the proof is not straightforward when considering sub/super solutions that are not smooth up the boundary. Therefore we provide here below a self contained proof of a comparison principle that we shall exploit later on.
\begin{lem}\label{wcpbd}
Let $u \in W^{1,p}_{loc}(\mathcal{D})\cap
C^0(\overline{\mathcal{D}})$ be a subsolution of problem
\eqref{auxprob} in the sense of \eqref{problem1weaksubsol} and let
$v\in W^{1,p}_{loc}(\mathcal{D})\cap C^0(\overline{\mathcal{D}})$ be
a supersolution of problem \eqref{auxprob} in the sense of
\eqref{problem1weaksupersol}. Then, if $u \leq v$ on $\partial
\mathcal{D}$ it follows that $u \leq v$ in $\mathcal{D}$.
\end{lem}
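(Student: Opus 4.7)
The plan is the classical Douglas/weak-comparison argument with the twist that neither $u$ nor $v$ is known to lie in $W^{1,p}(\mathcal{D})$, so one cannot test with $(u-v)^+$ directly. I will avoid this by working with the level-$\varepsilon$ truncation
\[
\varphi_\varepsilon:=(u-v-\varepsilon)^+,\qquad \varepsilon>0,
\]
whose support sits strictly inside $\mathcal{D}$. Indeed, since $u\leq v$ on $\partial\mathcal{D}$ and $u,v\in C^0(\overline{\mathcal{D}})$, we have $u-v-\varepsilon\leq -\varepsilon<0$ on $\partial\mathcal{D}$, hence by continuity there is an open neighborhood of $\partial\mathcal{D}$ on which $u-v-\varepsilon<0$. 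Consequently $\operatorname{supp}\varphi_\varepsilon\subset\subset\mathcal{D}$, and because $u,v\in W^{1,p}_{loc}(\mathcal{D})$, we have $\varphi_\varepsilon\in W^{1,p}_0(\mathcal{D})$ with compact support inside $\mathcal{D}$. A standard density argument then legitimates $\varphi_\varepsilon$ as a test function in \eqref{problem1weaksubsol} and \eqref{problem1weaksupersol}.

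Subtracting the supersolution inequality from the subsolution inequality and testing both against $\varphi_\varepsilon$ gives
\[
\int_{\{u>v+\varepsilon\}} \bigl(|\nabla u|^{p-2}\nabla u-|\nabla v|^{p-2}\nabla v\bigr)\cdot\nabla(u-v)\,dx \;\leq\; \int_{\{u>v+\varepsilon\}} p(x)\left(\frac{1}{u^\gamma}-\frac{1}{v^\gamma}\right)\varphi_\varepsilon\,dx.
\]
On the set $\{u>v+\varepsilon\}$ we have $u>v+\varepsilon>0$, and by continuity plus compactness of $\operatorname{supp}\varphi_\varepsilon\subset\subset\mathcal{D}$, also $v$ is bounded below by a positive constant there, so the integrands are well defined. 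Since $u>v>0$ on this set and $s\mapsto s^{-\gamma}$ is strictly decreasing on $(0,\infty)$, the right-hand side is $\leq 0$. On the other hand, the first inequality in \eqref{eq:inequalities} tells us that the left-hand side integrand is pointwise $\geq C_1(|\nabla u|+|\nabla v|)^{p-2}|\nabla u-\nabla v|^2\geq 0$. Combining both bounds forces equality everywhere, which means $\nabla u=\nabla v$ a.e.\ on $\{u>v+\varepsilon\}$; equivalently $\nabla\varphi_\varepsilon=0$ a.e.\ in $\mathcal{D}$.

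Since $\varphi_\varepsilon\in W^{1,p}_0(\mathcal{D})$ with vanishing gradient, the standard characterization of $W^{1,p}_0$ gives $\varphi_\varepsilon\equiv 0$, that is $u\leq v+\varepsilon$ in $\mathcal{D}$. Letting $\varepsilon\downarrow 0$ yields $u\leq v$ in $\mathcal{D}$, as desired.

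The only delicate point is the very first step: checking that $\varphi_\varepsilon$ is an admissible test function given that $u,v$ are merely in $W^{1,p}_{loc}$ and only continuous up to the boundary. Everything rests on the observation that the boundary ordering $u\leq v$ on $\partial\mathcal{D}$ together with the shift by $\varepsilon>0$ creates a genuine buffer near $\partial\mathcal{D}$, thus confining $\operatorname{supp}\varphi_\varepsilon$ to a compact subset of $\mathcal{D}$ where both $u$ and $v$ (and hence $u^{-\gamma}, v^{-\gamma}$) are uniformly bounded and bounded away from zero. Once this compactness is in hand, the singular nature of the nonlinearity causes no further trouble because its monotonicity makes the right-hand side a friend, not a foe, of the argument.
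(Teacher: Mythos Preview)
Your proof is correct and follows essentially the same approach as the paper: both use the $\varepsilon$-shifted truncation $(u-v-\varepsilon)^+$ to obtain a compactly supported test function, subtract the weak inequalities, and exploit the monotonicity of $s\mapsto s^{-\gamma}$ together with the standard $p$-Laplace inequality \eqref{eq:inequalities}. The only cosmetic difference is in the final step: you conclude $\varphi_\varepsilon\equiv 0$ for each $\varepsilon$ and then let $\varepsilon\downarrow 0$, whereas the paper passes to the limit via Fatou's lemma to obtain the weighted integral inequality for $(u-v)^+$ directly; both routes yield the same conclusion.
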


The proof of this lemma is contained in the appendix. We exploit now Lemma \ref{wcpbd} to study the boundary behaviour of the  solutions  to
\eqref{problem1}. The proof is actually the one in \cite{lazer}. Since we could not find an appropriate reference for the estimates that we need, we repeat the argument. We denote with $\phi_1$ the first (positive) eigenfunction of the $p$-laplacian in $\Omega$. Namely
\begin{equation}\label{einvaluprob}
\begin{cases} \displaystyle
-\Delta_p\phi_1=  \lambda_1 \phi_1^{p-1} & \text{in}\,\,\Omega  \\
\phi_1=0 & \text{on}\,\,\partial \Omega.
\end{cases}
\end{equation}

Having in mind Lemma \ref{wcpbd} we can prove a similar result to the one in \cite{lazer}, but in the quasilinear setting.

\begin{thm}\label{estbelowabove} Let  $u \in C^{1,\alpha}_{loc} (\Omega) \cap
C(\overline{\Omega})$ be a positive solution to
\eqref{problem1}.
Then there exist two positive constants $m_1$, $m_2$ and there
exists $\delta >0$ sufficiently small such that
\begin{equation} \label{estimateOnU}
m_1 \phi_1(x)^{\frac{p}{\gamma+p-1}} \leq u(x) \leq m_2
\phi_1(x)^{\frac{p}{\gamma+p-1}} \; \; \; \forall \; x \in
I_\delta(\partial \Omega).
\end{equation}
\end{thm}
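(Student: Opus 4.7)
The plan is to construct sub- and super-solutions of the form $m\,\phi_1^{\beta}$ with $\beta := p/(\gamma+p-1)$ and apply the comparison principle of Lemma \ref{wcpbd} on a thin tubular neighborhood $I_\delta(\partial\Omega)$ of the boundary. First I would absorb the regular nonlinearity into a variable coefficient by setting
\[
q(x) := 1 + u(x)^{\gamma}\,f(u(x)),
\]
so that $u$ solves $-\Delta_p u = q(x)/u^{\gamma}$, which has the shape of \eqref{auxprob}. Since $u\in C(\overline\Omega)$ vanishes on $\partial\Omega$ and $f$ is locally Lipschitz (hence bounded near $0$), choosing $\delta$ small enough guarantees $\tfrac12\leq q(x)\leq 2$ on $I_\delta(\partial\Omega)$, so $q\in L^\infty$ and is bounded away from $0$ as required by Lemma \ref{wcpbd}.

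The heart of the argument is a chain-rule computation. Using $-\Delta_p\phi_1 = \lambda_1\phi_1^{p-1}$ together with the algebraic identities $(\beta-1)(p-1)-1 = -\gamma\beta$ and $\beta(p-1)+\gamma\beta = p$, one finds
\[
-\Delta_p(m\,\phi_1^\beta)\;=\;m^{p-1}\beta^{p-1}\Big[(1-\beta)(p-1)\,|\nabla\phi_1|^{p}\,\phi_1^{-\gamma\beta} + \lambda_1\,\phi_1^{\beta(p-1)}\Big].
\]
Testing this against $q(x)/(m\phi_1^\beta)^{\gamma}$ and multiplying by $\phi_1^{\gamma\beta}$, the sub/super-solution condition reduces to the pointwise algebraic inequality
\[
m^{\gamma+p-1}\,\beta^{p-1}\Big[(1-\beta)(p-1)\,|\nabla\phi_1|^{p} + \lambda_1\,\phi_1^{p}\Big]\;\leq\; q(x)\qquad(\text{resp. }\geq).
\]
Since $\gamma>1$ gives $\beta<1$, and by the classical Hopf lemma applied to the regular eigenfunction $\phi_1$ one has $|\nabla\phi_1|\geq c_\ast>0$ on a sufficiently thin tubular neighborhood, the bracketed quantity is pinched between two positive constants on $I_\delta$. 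Hence $m = m_1$ small enough produces a subsolution and $m = m_2$ large enough produces a supersolution of \eqref{auxprob} throughout $I_\delta(\partial\Omega)$.

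To close the argument I would verify the boundary ordering on $\partial I_\delta(\partial\Omega) = \partial\Omega \cup \Gamma_\delta$, where $\Gamma_\delta := \{x\in\overline\Omega : \dist(x,\partial\Omega)=\delta\}$. On $\partial\Omega$ both $u$ and $m\phi_1^\beta$ vanish continuously; on the compact set $\Gamma_\delta$ both $u$ and $\phi_1$ are bounded away from zero (by strict positivity of $\phi_1$ in $\Omega$ and the strong minimum principle for $u$), so after possibly decreasing $m_1$ and enlarging $m_2$ one obtains $m_1\phi_1^\beta\leq u\leq m_2\phi_1^\beta$ on $\Gamma_\delta$. Lemma \ref{wcpbd} then yields \eqref{estimateOnU} in the whole $I_\delta(\partial\Omega)$. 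The main obstacle I anticipate is the lack of $C^1$-regularity of $u$ up to $\partial\Omega$: this is precisely why the generalized comparison principle of Lemma \ref{wcpbd}, designed to accept merely continuous boundary data, is indispensable, and why the absorption via $q(x)$ is needed to bring the problem into its scope.
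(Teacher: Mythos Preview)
Your proposal is correct and follows essentially the same route as the paper's proof: both absorb $f(u)$ into a coefficient $q(x)=1+u^\gamma f(u)$, compute $-\Delta_p(m\phi_1^\beta)$ via the chain rule to reduce the sub/super-solution condition to a pointwise algebraic inequality on $I_\delta(\partial\Omega)$, and then close with Lemma~\ref{wcpbd}. The only cosmetic difference is that the paper splits the constant into two factors---first choosing $s$ so that $s\phi_1^\beta$ is a sub/super-solution and then rescaling $u\mapsto\beta u$ to arrange the ordering on the inner boundary $\Gamma_\delta$---whereas you handle both constraints with a single constant $m$, which is marginally tidier but equivalent.
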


We postpone the proof of Theorem \ref{estbelowabove} in the appendix. We are now ready to prove Theorem \ref{Hopf} exploiting the previous preliminary results.
\begin{proof}[Proof of Theorem \ref{Hopf}]
Since the domain is of class $C^{2,\alpha}$ we may and do reduce to
work in a neighborhood of the boundary $I_{\bar\delta}(\partial \Omega)$
where the \emph{unique nearest point property} holds (see e.g.
\cite{FOO}). Arguing by contradiction, let us assume that there
exists a sequence of points $\{x_n\}$ in $I_{\bar\delta}(\partial
\Omega)$, such that $x_n \longrightarrow x_0 \in
\partial \Omega$, as $n \rightarrow +\infty$, and
\begin{equation}\label{Hopfassurdo}
\partial_{\nu(x_n)}u(x_n) \leq 0, \; \text{with} \; (\nu(x_n),\eta(x_n))
\geq \beta > 0.
\end{equation}
Without loss of generality, we can assume that $x_0=0 \in
\partial \Omega$ and $\eta(x_n)=e_N$. This follows by the fact that the p-Laplace operator is invariant under isometries. More precisely, for each $n\in\mathbb{N}$, we can consider an isometry $T_n: \R^N
\longrightarrow \R^N$ with the above mentioned properties just
composing a translation and a rotation of the axes. This procedure
generates a  new sequence of points $\{y_n\}$, where $y_n:=T_nx_n$,
such that every $y_n \in \text{span}\langle e_N \rangle$ and $y_n
\longrightarrow 0$ as $n \rightarrow +\infty$. Setting
$u_n(y):=u(T_n^{-1}(y))$, it follows that
\begin{equation}-\Delta_p u_n =\frac{1}{u_n^\gamma} + f(u_n) \qquad \text{in}\,\,\Omega_n=T_n(\Omega). \end{equation}
Now we set
\begin{equation}\label{seqScaling}
w_n(y):=\frac{u_n(\delta_ny)}{M_n}
\end{equation}
where $\delta_n:=\text{dist}(x_n,\partial
\Omega)=\text{dist}(T_nx_n,0)$ and $M_n:=u_n(\delta_n e_N)=u(x_n)$.
It follows that $\delta_n \rightarrow 0$ as $n \rightarrow +\infty$
and
\begin{itemize}
\item[-] $w_n$ is defined in
$\displaystyle \Omega_n^*:=\frac{\Omega_n}{\delta_n}$.\\

\item[-] $w_n(e_N)=1$.\\

\item[-] $M_n \rightarrow 0$, as $n \rightarrow +\infty$.
\end{itemize}
It is easy to see that $w_n$ weakly satisfies
\begin{equation}\label{prescaling}
\begin{split}
-\Delta_p w_n =\frac{\delta_n^p}{M_n^{\gamma+p-1}}\left(\frac{1}{w_n(y)^\gamma} +
M_n^\gamma f(u_n(\delta_ny))\right) \qquad \text{in} \; \Omega^*_n.
\end{split}
\end{equation}

The key idea of the proof is to argue by contradiction exploiting a limiting profile, that we shall denote by $u_\infty$, which is a solution to a limiting problem in a half space. The contradiction will then follows applying the classification result in Theorem \ref{uniqueness}. Here below we develop this argument and we suggest to the reader to keep in mind that
 $f$ is bounded, the term $M_n^\gamma f(u_n(\delta_ny))$ will
vanish since $M_n$ goes to zero and $\displaystyle
\frac{\delta_n^p}{M_n^{\gamma+p-1}}$  is bounded as a consequence of
Theorem \ref{estbelowabove}. Therefore the expected  limiting equation is:
\begin{equation}\label{limitequation}
-\Delta_p w_\infty = \frac{\tilde{C}}{w_\infty^\gamma} \qquad
\text{in} \; \R^N_+\,.
\end{equation}
Let us provide the details needed to pass to the limit.  We claim
that:
\begin{itemize}
\item[-] $w_n \overset{C^{1,\alpha}}{\longrightarrow} w_\infty$, as $n \rightarrow
+\infty$, in any compact set $K$ of $\R^N_+$.\\
\item[-] $w_\infty \in C^{1,\alpha}(\R^N_+) \cap C(\overline{\R^N_+})$.\\
\item[-] $w_\infty=0$ on $\partial \R^N_+$\,.\\

\end{itemize}
To prove this let us consider a compact set $K\subset \R^N_+$. For $n\in\mathbb N$ large we can assume that
$K\subset I_{\bar\delta/\delta_n}(\partial \Omega^*_n)$ so that   Theorem
\ref{estbelowabove} can be exploited.

\emph{Claim 1.} We claim that $w_n(y) > 0$ for all $y \in K$ and for
 $n \in \N$ large.

Let $y \in K$. Hence, by  Theorem \ref{estbelowabove}
$$w_n(y):=\frac{u_n(\delta_n y )}{M_n} \geq L \frac{(\text{dist}(\delta_n y, \partial \Omega_n))^{\frac{p}{\gamma + p-1}}}{M_n}.$$
In particular, by the fact that $\text{dist}(\delta_n y, \partial
\Omega_n) \geq \overline{C} \delta_n$, it follows that
\begin{equation}\label{wnstaccatadazero} w_n(y)
\geq L \frac{(\overline{C}\delta_n)^{\frac{p}{\gamma + p-1}}}{M_n}
\geq C(K, \gamma, m_1)>0\,.
\end{equation}

\emph{Claim 2.} We claim that $w_n
\overset{C^{1,\alpha}}{\longrightarrow} w_\infty$, as $n \rightarrow
+\infty$, in any compact set $K$ of $\R^N_+$.

Since $\text{dist}(y, \partial \Omega^*_n) \leq C $ for every $y \in
K$, by Theorem \ref{estbelowabove} it follows that
\begin{equation}
\begin{split}
w_n(y) = \frac{u_n(\delta_n\,y)}{M_n} &\leq L m_2
\frac{\left[\text{dist}(\delta_n y, \partial
\Omega_n)\right]^{\frac{p}{\gamma+p-1}}}{M_n}\\
&= L m_2 \frac{\delta_n^\frac{p}{\gamma+p-1}\left[\text{dist}(y,
\partial \Omega^*_n)\right]^{\frac{p}{\gamma+p-1}}}{M_n}\\
&\leq L m_2  C^\frac{p}{\gamma+p-1}
\frac{\delta_n^\frac{p}{\gamma+p-1}}{M_n}\\
& \leq L  C^\frac{p}{\gamma+p-1} C(K,m_2).
\end{split}
\end{equation}

Hence
$$\|w_n\|_{L^\infty(K)} \leq C(K)$$
for any compact set $K$ of $\R^N_+$. By standard regularity theory
(see e.g. \cite{GT}) it follows  that $w_n$ is uniformly bounded in
$C^{1,\alpha}(K')$ for any compact set $K' \subset K$. Therefore, by Ascoli's Theorem, we can pass to the limit in any compact set and with $C^{1,\alpha'}$ convergence. Exploiting a standard diagonal process we can therefore define the limiting  function $w_\infty$ that turns out to be a solution to \eqref{limitequation} in the half space.
The fact that $\displaystyle \frac{\Omega_n}{\delta_n}$ leads to the
limiting domain $\R^N_+$ as  $n \rightarrow +\infty$ follows by
standard arguments that we omit.

It remains to verify the Dirichlet datum for the limiting profile
$w_\infty$. More precisely we have to show that $w_\infty=0$ on
$\partial \R^N_+$. By Theorem \ref{estbelowabove} it follows that
\begin{equation}\label{dirichletdatum}
\begin{split}
w_n(y) = \frac{u_n(\delta_n\,y)}{M_n} &\leq  L m_2 \frac{\left[\text{dist}(\delta_n y, \partial
\Omega_n)\right]^{\frac{p}{\gamma + p-1}}}{M_n}\\
&= L m_2 \frac{\delta_n^\frac{p}{\gamma + p-1}\left[\text{dist}(y,
\partial \Omega^*_n)\right]^{\frac{p}{\gamma + p-1}}}{M_n}\\
&\leq C(K,L,m_2,m_1) \left[\text{dist}(y,
\partial \Omega^*_n)\right]^{\frac{p}{\gamma + p-1}}    \quad \text{in}
\ \Omega^*_n.
\end{split}
\end{equation}

Since $\Omega_n^* \rightarrow \R^N_+$, as $n$ goes to $+\infty$, by
\eqref{dirichletdatum} and \eqref{wnstaccatadazero}, passing to the
limit we have that
\begin{equation} \label{growthwinf} 0 \leq w_\infty(y) \leq C(K,L,m_2,m_1)
\left[\text{dist}(y,
\partial \R_+^N)\right]^{\frac{p}{\gamma+p-1}}.
\end{equation}
In a similar fashion, and exploiting again Theorem \ref{estbelowabove}, we also deduce that

\begin{equation} \label{growthwinfx}  w_\infty(y) \geq C(K,L,m_2,m_1)
\left[\text{dist}(y,
\partial \R_+^N)\right]^{\frac{p}{\gamma+p-1}}.
\end{equation}
By  \eqref{growthwinf} it follows that $w_\infty(y)=0$ as claimed. Nevertheless, collecting \eqref{growthwinf} and \eqref{growthwinfx},
we deduce that
$w_\infty$ has the right asymptotic behaviour needed to apply
Theorem \ref{uniqueness}, see \eqref{grow223324}. This shows that
that $w_\infty$ is the unique solution to \eqref{limitequation} given by
\begin{equation}\label{limitsol}
w_\infty(x)=w_\infty(x_N) =\left[\frac{\tilde{C}}{p^{p-1}}
\frac{(\gamma+p-1)^p}{(p-1)(\gamma-1)}\right]^{\frac{1}{\gamma+p-1}}
(x_N)^{\frac{p}{\gamma+p-1}}.
\end{equation}
On the other hand, passing to the limit in \eqref{Hopfassurdo}, it would follows that
$$\partial_{\bar\nu} w_\infty(e_N) \leq 0 $$
for some $\bar\nu\in\mathbb{R}^N$ with $(\bar\nu,e_N)>0$. Clearly this is a contradiction with \eqref{limitsol} thus proving the result.
\end{proof}

Now using the Theorem \ref{Hopf} we want to prove the symmetry
result.

\section{Symmetry: Proof of Theorem \ref{symmetrybdd}}\label{sec4}
In this section we prove our symmetry (and monotonicity) result. Actually we provide the details needed for the application of the \emph{moving plane method}. For the semilinear case  see \cite{CES, masha, montrucchio}, in the quasilinear setting we use the technique developed in \cite{DSJDE}. \\

\noindent We start with some notation: for a real number $\lambda$ we set
\begin{equation}\label{eq:sn2}
\Omega_\lambda=\{x\in \Omega:x_1 <\lambda\}
\end{equation}
\begin{equation}\label{eq:sn3}
x_\lambda= R_\lambda(x)=(2\lambda-x_1,x_2,\ldots,x_n)
\end{equation}
which is the reflection through the hyperplane $T_\lambda :=\{x\in
\mathbb R^n :  x_1= \lambda\}$. Also let
\begin{equation}\label{eq:sn4}
a=\inf _{x\in\Omega}x_1.
\end{equation}
Finally we set
\begin{equation}\label{eq:sn33}
u_\lambda(x)=u(x_\lambda)\,.
\end{equation}
Finally we define
\begin{equation}\nonumber
\Lambda_0=\{a<\lambda<0 : u\leq
u_{t}\,\,\,\text{in}\,\,\,\Omega_t\,\,\,\text{for all
$t\in(a,\lambda]$}\}\,.
\end{equation}
In the following the critical set of $u$
$$Z_u:=\{\nabla u=0\}$$
will play a crucial role. Let us first note that, as a consequence of Theorem
\ref{Hopf}, we know that
\[
Z_u\subset\subset\Omega\,.
\]
This fact allows to exploit the results of \cite{DSJDE} since the solution is positive in the interior of the domain (and the nonlinearity is no more singular there). Therefore we conclude that
\[
|Z_u|=0\qquad\text{and}\qquad \Omega\setminus Z_u\quad\text{is connected}.
\]

\begin{proof}[Proof of Theorem \ref{symmetrybdd}]
The proof follows via the \emph{moving plane technique}. We start showing that:
 $$\Lambda_0 \neq \emptyset\,.$$
 To prove this, let us consider $\lambda>a$ with $\lambda-a$ small.
 By Theorem \ref{Hopf} it follows that
$$\frac{\partial u}{\partial x_1} > 0 \quad \text{in} \quad
\Omega_\lambda \cup R_\lambda(\Omega_\lambda),$$ and this immediately proves that $u <
u_\lambda$ in $\Omega_\lambda$.

Now we define $$\lambda_0 := \sup \Lambda_0.$$
We shall  show that $u \leq u_\lambda$ in $\Omega_\lambda$
for every $\lambda \in (a,0]$, namely that:
 $$\lambda_0 = 0\,.$$
 To prove this, we assume that
$\lambda_0<0$ and we reach a contradiction by proving that $u\leq
u_{\lambda_0+\nu}$ in $\Omega_{\lambda_0+\nu}$ for any
$0<\nu<\bar\nu$ for some  $\bar\nu>0$ (small). By continuity we know
that $u\leq u_{\lambda_0}$ in $\Omega_{\lambda_0}$.
The \emph{strong comparison principle} (see e.g. \cite{pucser,V}) holds true in $\Omega_{\lambda_0}\setminus Z_u$, providing that
\[
u< u_{\lambda_0}\qquad\text{in}\quad\Omega_{\lambda_0}\setminus Z_u\,.
\]
Note in fact that, in each connected component $\mathcal C$ of $\Omega_{\lambda_0}\setminus Z_u$, the \emph{strong comparison principle}
implies that $u< u_{\lambda_0}$ in $\mathcal C$ unless $u\equiv u_{\lambda_0}$ in $\mathcal C$. Actually the latter case is not possible. In fact, if $\partial\mathcal C\cap\partial\Omega\neq\emptyset$ this is not possible in view of the zero Dirichlet baundary datum since $u$ is positive in the interior of the domain. If else $\partial\mathcal C\cap\partial\Omega=\emptyset$ then we should have a \emph{local symmetry region} causing $\Omega\setminus Z_u$ to be not connected, against what we already remarked above.\\

\noindent Therefore, given a
compact set $K \subset \Omega_{\lambda_0}\setminus Z_u$, by uniform continuity we
can ensure that $u < u_{\lambda_0 + \nu}$ in $K$ for any $0 < \nu <
\bar \nu$ for some small $\bar \nu > 0$. Moreover, by Theorem
\ref{Hopf} and taking into account the zero Dirichlet boundary datum, it is easy to show that, for some $\delta>0$, we have that
 \begin{equation}\label{dkjfjla}
 u < u_{\lambda_0+\nu}\quad \text{in}\quad
I_\delta(\partial \Omega) \cap \Omega_{\lambda_0 + \nu}
\end{equation}
 for any $0< \nu < \bar \nu$.
 This is quite standard once that Theorem
\ref{Hopf} is in force. The hardest part is the study in the region near $\partial\Omega \cap T_{\lambda_0 + \nu}$. Here we exploit the monotonicity properties of the solutions proved in Theorem
\ref{Hopf} that works once we note that  $(e_1, \eta (x))>0$ in a neighborhood of $\partial\Omega \cap T_{\lambda_0 + \nu}$ since the domain is smooth and strictly convex.

Now we define
$$w_{\lambda_0+\nu}:= (u-u_{\lambda_0+\nu})^+$$
for any $0 < \nu < \bar \nu$. We already showed in \eqref{dkjfjla} that
$\text{supp}(w_{\lambda_0+\nu}) \subset \subset
\Omega_{\lambda_0+\nu}$. Moreover  $w_{\lambda_0+\nu}=0$
in $K$ by construction.

For any $\tau>0$ fixed, we can choose $\bar\nu$ small and $K$ large so that
\[
|\Omega_{\lambda_0+\nu}\setminus
K| < \tau\,.
\]
Here we are also exploiting the fact that the critical set $Z_u$ has zero Lebesgue measure (see \cite{DSJDE}).

In particular we take  $\tau$ sufficiently small so that the \emph{weak comparison principle in small
domains}  (see \cite{DSJDE}) works, showing that
$$w_{\lambda_0+\nu}=0 \quad \text{in} \quad \Omega_{\lambda_0 + \nu}$$
for any $0 < \nu < \bar \nu$ for some small $\bar \nu > 0$. But this
is in contradiction with the definition of $\lambda_0$. Hence
$\lambda_0=0$.

The desired symmetry (and monotonicity) result follows now
performing the procedure in the same way but in the opposite direction.
\end{proof}

\section*{Appendix}

We start this appendix, by proving Lemma \ref{derbehaviorODE} that is an essential tool in the ODE analysis.

\begin{proof}[Proof of Lemma \ref{derbehaviorODE}]	
	We first claim  that $u'(t)\geq0$ for every $t>0$. To prove this fact we argue by
	contradiction and  assume that there exist $t_0\geq0$
	such that $u'(t_0)<0$. Setting
	$$w(t):=|u'(t)|^{p-2}u'(t)$$ it
	follows by the equation in \eqref{ODEeq} that $w$ is a strictly
	decreasing function. Therefore
	$u'(t)\leq-C:=u'(t_0)<0$ for every $t \geq t_0$ and
	\begin{equation}\label{fhfgakfgkfh}
	u(t)=u(t_0)+\int_{t_0}^tu'(s) \, ds \leq u(t_0)-\int_{t_0}^t C \, ds = -Ct+Ct_0+u(t_0).
	\end{equation}
	This would force $u$ to be negative for $t$ large
	in contradiction with with the fact that $u$ is
	positive by assumption.  Therefore we deduce that $u'(t),w(t)\geq0$ for t
	sufficiently large. Recalling that $w$ is a strictly
	decreasing function, we deduce that actually $u'(t),w(t)>0$. Furthermore
	$w(t) \rightarrow M\geq 0$ as $t$ goes to
	$+\infty$.  I is easy to show that $M=0$.
	If $M > 0$ in fact, arguing as in \eqref{fhfgakfgkfh},  we would have
	$$u(t)\geq Mt+c$$
	for $t$ sufficiently large. This gives a contradiction with our
	initial assumption \eqref{inftybehavior}, hence $M=0$. \\
	
	\noindent Let us
	now set
	\[
	h(t):=\frac{t^{-\beta \gamma+1}}{\beta \gamma-1}\,.
	\] By Cauchy's
	Theorem we have that for $t$ large enough and $k>t$ fixed there
	exists $\xi_t \in (t,t+k)$ such that
	\begin{equation}\label{cauchylagrange}
	\frac{w(t)-w(t+k)}{h(t)-h(t+k)}=\frac{w'(\xi_t)}{h'(\xi_t)}.
	\end{equation}
	Letting $k \rightarrow +\infty$ in \eqref{cauchylagrange} we obtain
	\begin{equation}\label{cauchylagrangexfgdg}
	\frac{w(t)}{h(t)}=-\frac{([u']^{p-1})'(\xi_t)}{(\xi_t)^{-\beta\gamma}}=\frac{t^{\beta
			\gamma}}{u^\gamma}.
	\end{equation}
	for $t$ large enough. By \eqref{inftybehavior} and
	\eqref{cauchylagrangexfgdg} we deduce that $\frac{w(t)}{h(t)}$ is bounded at infinity, thus proving
	\eqref{inftybehaviorder}.
\end{proof}

Now we are ready to prove a useful weak comparison principle in bounded domains (Lemma \ref{wcpbd}).

\begin{proof}[Proof of Lemma \ref{wcpbd}]
	Let us set:
	\begin{equation}\label{testwcpbd}
	w_\varepsilon:=(u-v-\varepsilon)^+
	\end{equation}
	where $\varepsilon > 0$. We notice that $w_\varepsilon$ is suitable as
	test function since $\text{supp}(w_\varepsilon)\subset \subset
	\mathcal{D}$ and $u,v \in W^{1,p}_{loc}(\mathcal{D})$.
	Hence $w_\varepsilon \in W^{1,p}_0(\mathcal{D})$ and, by density arguments,  we can plug $w_\varepsilon$ as test function in
	\eqref{problem1weaksubsol} and \eqref{problem1weaksupersol} and by
	subtracting we obtain
	\begin{equation}\label{weaksubtranct}
	\begin{split}
	\int_{\mathcal{D} \cap \text{supp}(w_\varepsilon)} \left( |\nabla
	u|^{p-2}\nabla u - |\nabla v|^{p-2} \nabla v, \nabla w_\varepsilon
	\right) \, dx \, &\leq \int_{\mathcal{D} \cap
		\text{supp}(w_\varepsilon)} p(x) \left(\frac{1}{u^\gamma}-
	\frac{1}{v^\gamma} \right)w_\varepsilon \, dx.
	\end{split}
	\end{equation}
	Taking into account the fact that $u-v \geq u-v-\varepsilon$, the fact that $p(\cdot)$ is positive and $u^{-\gamma}$ is decreasing,  it follows that
	\begin{equation}\label{finalestlz}
	\int_{\mathcal{D} \cap \text{supp}(w_\varepsilon)} \left( |\nabla
	u|+ |\nabla v|\right)^{p-2}|\nabla w_\varepsilon|^2 \, dx \leq 0.
	\end{equation}
	By Fatou Lemma, as $\varepsilon$ tends to zero, we deduce that
	
	\begin{equation}\nonumber
	\int_{\mathcal{D} } \left( |\nabla
	u|+ |\nabla v|\right)^{p-2}|\nabla (u-v)^+|^2 \, dx \leq 0
	\end{equation}
	showing that $(u-v)^+$ is constant, and therefore zero by the boundary data. Thus we deduce that $u
	\leq v$ in $\mathcal{D}$ proving the thesis.
\end{proof}

Now we are ready to prove the analogous of the result contained in \cite{lazer}, but in the quasilinear setting.

\begin{proof}[Proof of Theorem \ref{estbelowabove}]
	We rewrite  the equation in \eqref{problem1} as
	\begin{equation}\label{racc}
	-\Delta_p u = \frac{1}{u^\gamma}+f(u) = \frac{p(x)}{u^\gamma} \quad
	\quad \text{in} \; \Omega
	\end{equation}
	where $p(x):=1+u(x)^\gamma f(u(x))$. In the following we assume that $\delta$ is small enough so that
	$$p(x)>0 \quad \quad \forall \, x \in I_\delta(\partial \Omega).$$
	Arguing as in \cite{lazer}, we exploit the principal eigenfunction
	$\phi_1$ of problem \eqref{einvaluprob} and the fact that $\phi_1 \in
	C^{1,\alpha} (\overline \Omega) $ (see e.g. \cite{anane, lindqv,
		klin}) and
	$$\nabla \phi_1 (x) \neq 0 \quad \quad \forall \, x \in \partial \Omega.$$
	For $\displaystyle t:=\frac{p}{\gamma+p-1}$ we set
	$\Psi:=s\,\phi_1^t$,  $s>0$. It is easy to see that
	$$-\Delta_p \Psi\,=\,\frac{g(x,s)}{\Psi^\gamma} \quad \quad \text{in}\,\,\, I_\delta(\partial \Omega)$$
	where
	\begin{equation} \label{gggg} g(x,s):=s^{\gamma+p-1} t^{p-1} \left[
	\frac{(\gamma-1)(p-1)}{\gamma+p-1} |\nabla v_1(x)|^p +  \lambda_1
	v_1(x)^p \right ].
	\end{equation}
	Since $0<t<1$,  we can choose
	two positive constants $s_1$ and $s_2$ such that $0<s_1<s_2$ and
	\begin{equation} \label{relation} g(x,s_1)<p(x)<g(x,s_2) \quad
	\quad \forall \ x \in I_\delta(\partial \Omega).
	\end{equation}
	Hence, setting $u_1:=s_1\phi_1^t$ and
	$u_2:=s_2\phi_1^t$, we have that
	\begin{equation}\label{supersol}
	-\Delta_p u_1 < \frac{p(x)}{u_1^\gamma} \quad \quad \text{in} \;
	I_\delta(\partial \Omega)
	\end{equation}
	and
	\begin{equation}\label{subsol}
	-\Delta_p u_2 > \frac{p(x)}{u_2^\gamma} \quad \quad \text{in} \;
	I_\delta(\partial \Omega)
	\end{equation}
	In order to control the datum on the boundary of $I_\delta(\partial \Omega)$ (in the interior of the domain), we need to switch from $u$ to
	$u_\beta:=\beta u$ observing that
	$$-\Delta_p u_\beta\,=\, \beta^{\gamma+p-1}\frac{p(x)}{u_\beta^\gamma}.$$
	For $\beta_1>0$  large it follows that
	$u_{\beta_1}$ and $u_1$ satisfy the following problem:
	\begin{equation}\label{supersolprob}
	\begin{cases} \displaystyle
	-\Delta_p u_{\beta_1} \geq \frac{p(x)}{u_{\beta_1}^\gamma} & \text{in}\,\,I_\delta(\partial \Omega) \\
	\displaystyle-\Delta_p u_1 < \frac{p(x)}{u_1^\gamma} & \text{in}\,\,I_\delta(\partial \Omega)\\
	\displaystyle u_{\beta_1} \geq u_1 & \text{on}\,\,\partial
	I_\delta(\partial \Omega).
	\end{cases}
	\end{equation}
	By Lemma \ref{wcpbd}
	it follows now that
	\begin{equation}\label{below}
	u_{\beta_1}=\beta_1 u \geq u_1 \quad \quad
	\text{in}\,\,I_\delta(\partial \Omega).
	\end{equation}
	Similarly, for$\beta_2>0$  small, it follows that
	$u_{\beta_2}$ and $u_2$ satisfy the problem:
	\begin{equation}\label{ubol}
	\begin{cases} \displaystyle
	-\Delta_p u_{\beta_2} \leq \frac{p(x)}{u_{\beta_2}^\gamma} & \text{in}\,\,I_\delta(\partial \Omega) \\
	\displaystyle-\Delta_p u_2 > \frac{p(x)}{u_2^\gamma} & \text{in}\,\,I_\delta(\partial \Omega)\\
	\displaystyle u_{\beta_2} \leq u_2 & \text{on}\,\,\partial
	I_\delta(\partial \Omega).
	\end{cases}
	\end{equation}
	By Lemma \ref{wcpbd} it follows that
	\begin{equation}\label{above}
	u_{\beta_2}=\beta_2 u \leq u_2 \quad \quad
	\text{in}\,\,I_\delta(\partial \Omega).
	\end{equation}
	Hence the thesis is proved with  $\displaystyle m_1:=\frac{s_1}{\beta_1}$ and
	$\displaystyle m_2:=\frac{s_2}{\beta_2}$.
\end{proof}

\end{document}